\documentclass[11pt]{article}

\usepackage{amsmath}
\usepackage{amsthm}
\usepackage{amssymb}
\usepackage{latexsym}
\usepackage{textcomp}
\usepackage[T1]{fontenc}
\usepackage[sc]{mathpazo}
\usepackage[small,it]{caption}
\usepackage{pstricks}
\usepackage{graphicx, pst-plot, pst-node, pst-text, pst-tree}
\usepackage{color}
\usepackage{RBauthblk}
\definecolor{darkblue}{rgb}{0, 0, .4}

\usepackage[bookmarks]{hyperref}
\hypersetup{
	colorlinks=true,
	linkcolor=darkblue,
	anchorcolor=darkblue,
	citecolor=darkblue,
	urlcolor=darkblue,
	pdfpagemode=UseThumbs,
	pdftitle={Modular Decomposition and the Reconstruction Conjecture},
	pdfsubject={Combinatorics},
	pdfauthor={R. Brignall, N. Georgiou, R. Waters},
	pdfkeywords={todo}
}
\theoremstyle{plain}
\newtheorem{theorem}{Theorem}[section]
\newtheorem{corollary}[theorem]{Corollary}
\newtheorem{lemma}[theorem]{Lemma}

\newtheorem*{rc}{The Reconstruction Conjecture}

\theoremstyle{definition}

\theoremstyle{remark}


\newcounter{todocounter}

\setlength{\textwidth}{6.4in}
\setlength{\textheight}{8.9in}
\setlength{\topmargin}{0pt}
\setlength{\headsep}{0pt}
\setlength{\headheight}{0pt}
\setlength{\oddsidemargin}{0pt}
\setlength{\evensidemargin}{0pt}
\makeatletter
\newfont{\footsc}{cmcsc10 at 8truept}
\newfont{\footbf}{cmbx10 at 8truept}
\newfont{\footrm}{cmr10 at 10truept}
\pagestyle{plain}

\title{Modular decomposition and the Reconstruction Conjecture}

\author[1]{Robert~Brignall}
\author[2]{Nicholas~Georgiou\thanks{Supported by the Heilbronn Institute for Mathematical Research.}}
\author{Robert~J.~Waters}
\affil[1]{Department of Mathematics and Statistics, The Open University, Milton Keynes, MK7 6AA}
\affil[2]{Department of Mathematics, University of Bristol, Bristol, BS8 1TW}

\date{27th February 2012}

\begin{document}
\maketitle

\newcommand{\I}{\mathcal{I}}
\newcommand{\skel}[1]{\mathrm{Skel}(#1)}

\begin{abstract}
We prove that a large family of graphs which are decomposable with respect to the modular decomposition can be reconstructed from their collection of vertex-deleted subgraphs.
\end{abstract}

\section{Introduction}

In this short paper, we present a strong connection between the modular decomposition of graphs and the celebrated Reconstruction Conjecture (RC). For a simple, undirected graph $G$ on $n$ vertices, a \emph{card} of $G$ is any graph of the form $G-v$ for $v\in G$. The \emph{deck} of $G$, denoted $D(G)$, is the collection of all $n$ cards.

\begin{rc}[Ulam, 1960~\cite{ulam:a-collection-of:}]
Every graph on three or more vertices is uniquely determined by its deck.
\end{rc}

The starting point of our contribution is the well-known result that states graphs on at least three vertices with more than one component are reconstructible (see, e.g.~\cite{harary:on-the-reconstr:}). The proof of this result proceeds by collecting all components from all cards in the deck, and then successively removing the largest component (which must be a component of the original graph) together with a collection of components that are ``attributable'' to this largest one. Our aim here is to apply this technique to intervals arising in the modular decomposition (for definitions, see Section~\ref{sec-definitions}).

Modular decomposition dates back at least to a 1953 talk
of Fra\"{\i}ss\'e, see~\cite{fraisse:on-a-decomposit:} for the abstract.  The first article
using modular decompositions seems to be Gallai~\cite{gallai:transitiv-orien:} who applied them to the study of
transitive orientations of graphs. It has since emerged as a versatile and powerful tool, having been rediscovered under a variety of names\footnote{For example substitution decomposition, disjunctive decomposition and $X$-join.} in settings ranging from game theory to combinatorial optimisation.

The earliest connection between RC and the modular decomposition seems to be D\"orfler in 1972~\cite{dorfler:bemerkungen-zur:}, where it is shown how a specific family of decomposable graphs may be reconstructed. See also~\cite{dorfler:some-results:,dorfler:eine-klasse:,willomitzer:ein-beitrag-zur-ulam:} for similar results, all of which are generalised by the results in this note. More recently, Skums et al~\cite{skums:operator-decomp-of:} observed a connection between RC and the ``operator decomposition'': although not exactly a special case of the modular decomposition, the main result of that paper can be derived from this one. In the study of ordered sets, a connection between RC and ``lexicographic sums'' has appeared, e.g.\ in Rampon~\cite{rampon:what-is-reconstruction:}, or Chapter 9 of Schr\"oder's book~\cite{schroder:ordered-sets:}.

However, most important to us is the work of Ill\'e~\cite{ille:recognition-problem:} who proved that indecomposable graphs are \emph{recognisable}, and Basso-Gerbelli and Ill\'e~\cite{basso:la-reconstruction:} who considered the reconstruction problem for decomposable graphs with at least two non-trivial intervals. We will review the pertinent results from these two papers in the next section.

After the preliminary definitions and results, Section~\ref{sec-skel-recon} shows how to recover the ``skeleton'' and the intervals of a decomposable graph, and these are then used in Section~\ref{sec-graph-recon} to identify many cases where decomposable graphs can be reconstructed. The final section contains some concluding remarks.

\section{Preliminary definitions and results}\label{sec-definitions}

The graphs we will consider here are all simple (no loops or multiple edges) and undirected, and (to avoid trivialities) will have at least 3 vertices unless stated otherwise. For graphs $H$ and $G$, we write $H\leq G$ to mean that $H$ is an induced subgraph of $G$.

An \emph{interval} of a graph $G$ is the induced subgraph $I$ on a set of vertices for which $N(u)\setminus V(I) = N(v)\setminus V(I)$ for every $u,v\in V(I)$. Every singleton is an interval, as is all of $G$ and the empty set.  All other interval are said to be \emph{proper}, and we say that a graph is \emph{indecomposable} if it has no proper intervals, and \emph{decomposable} otherwise.\footnote{The terms prime, irreducible, primitive or (in the analogous setting for permutations) simple have also been used in the past to mean indecomposable.} See Figure~\ref{fig-small-indec} --- note that there are no indecomposable graphs on 3 vertices.

\begin{figure}
\begin{center}
\begin{tabular}{ccccccccc}
\psset{xunit=0.01in, yunit=0.01in}
\psset{linewidth=0.005in}
\begin{pspicture}(0,0)(60,40)
\Cnode*[fillstyle=solid,radius=0.03in](0,20){1}
\Cnode*[fillstyle=solid,radius=0.03in](20,20){2}
\Cnode*[fillstyle=solid,radius=0.03in](40,20){3}
\Cnode*[fillstyle=solid,radius=0.03in](60,20){4}
\ncline{-}{1}{2}
\ncline{-}{2}{3}
\ncline{-}{3}{4}
\end{pspicture}
&\rule{10pt}{0pt}&
\psset{xunit=0.01in, yunit=0.01in}
\psset{linewidth=0.005in}
\begin{pspicture}(0,0)(80,40)
\Cnode*[fillstyle=solid,radius=0.03in](0,20){1}
\Cnode*[fillstyle=solid,radius=0.03in](20,20){2}
\Cnode*[fillstyle=solid,radius=0.03in](40,20){3}
\Cnode*[fillstyle=solid,radius=0.03in](60,20){4}
\Cnode*[fillstyle=solid,radius=0.03in](80,20){5}
\ncline{-}{1}{2}
\ncline{-}{2}{3}
\ncline{-}{3}{4}
\ncline{-}{4}{5}
\end{pspicture}
&\rule{10pt}{0pt}&
\psset{xunit=0.012in, yunit=0.012in}
\psset{linewidth=0.005in}
\begin{pspicture}(0,0)(40,40)
\Cnode*[fillstyle=solid,radius=0.03in](10,5){1}
\Cnode*[fillstyle=solid,radius=0.03in](30,5){2}
\Cnode*[fillstyle=solid,radius=0.03in](37,24){3}
\Cnode*[fillstyle=solid,radius=0.03in](20,38){4}
\Cnode*[fillstyle=solid,radius=0.03in](3,24){5}
\ncline{-}{1}{2}
\ncline{-}{2}{3}
\ncline{-}{3}{4}
\ncline{-}{4}{5}
\ncline{-}{5}{1}
\end{pspicture}
&\rule{10pt}{0pt}&
\psset{xunit=0.012in, yunit=0.012in}
\psset{linewidth=0.005in}
\begin{pspicture}(0,0)(40,40)
\Cnode*[fillstyle=solid,radius=0.03in](10,5){1}
\Cnode*[fillstyle=solid,radius=0.03in](30,5){2}
\Cnode*[fillstyle=solid,radius=0.03in](37,24){3}
\Cnode*[fillstyle=solid,radius=0.03in](20,38){4}
\Cnode*[fillstyle=solid,radius=0.03in](3,24){5}
\ncline{-}{1}{2}
\ncline{-}{2}{3}
\ncline{-}{3}{4}
\ncline{-}{4}{5}
\ncline{-}{5}{1}
\ncline{-}{3}{5}
\end{pspicture}
&\rule{10pt}{0pt}&
\psset{xunit=0.01in, yunit=0.01in}
\psset{linewidth=0.005in}
\begin{pspicture}(0,0)(60,40)
\Cnode*[fillstyle=solid,radius=0.03in](0,20){1}
\Cnode*[fillstyle=solid,radius=0.03in](20,20){2}
\Cnode*[fillstyle=solid,radius=0.03in](40,20){3}
\Cnode*[fillstyle=solid,radius=0.03in](60,20){4}
\Cnode*[fillstyle=solid,radius=0.03in](30,37){5}
\ncline{-}{1}{2}
\ncline{-}{2}{3}
\ncline{-}{3}{4}
\ncline{-}{2}{5}
\ncline{-}{3}{5}
\end{pspicture}
\end{tabular}
\end{center}
\caption{The indecomposable graphs on $4$ and $5$ vertices.}\label{fig-small-indec}
\end{figure}
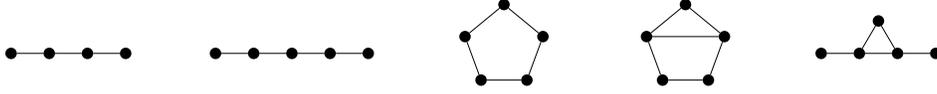

The significance of indecomposable graphs arises because they form the ``building blocks'' from which all other graphs are constructed, by means of the modular decomposition. We say that $G$ is an \emph{inflation} of a graph $K$ by the graphs $\{I_k:k\in K\}$ if $G$ is obtained by replacing each vertex $k\in K$ with the graph $I_k$ so that they form an interval in $G$. We write this as $G=K[I_k:k\in K]$.

\begin{theorem}[Modular Decomposition]\label{thm-modular-decomp}For every graph $G$, there exists a unique indecomposable graph $K$ such that $G=K[I_k:k\in K]$. Moreover, when $|K| > 2$, the graphs $I_k$ are uniquely determined.
\end{theorem}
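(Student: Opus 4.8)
The plan is to prove the result in three movements --- a toolkit of interval arithmetic, an existence argument by successive coarsening, and a uniqueness argument driven by a classification of the intervals of an inflation. Throughout I take $G$ to have at least two vertices and, to avoid the vacuous decomposition $G=K[G]$ with $|K|=1$, I only consider partitions of $V(G)$ into intervals that are not the single block $V(G)$; equivalently $K$ has at least two vertices.

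First I would assemble the elementary ``arithmetic'' of intervals, each item following by a direct check of the condition $N(u)\setminus V(I)=N(v)\setminus V(I)$: for intervals $A,B$ of $G$, (i) $A\cap B$ is an interval; (ii) if $A\cap B\neq\emptyset$ then $A\cup B$ is an interval; (iii) if $A,B$ overlap properly then $A\setminus B$ is an interval; and (iv) any two \emph{disjoint} intervals are joined either completely or not at all, so the adjacency between the blocks of a partition into intervals is well defined and the quotient $G/\mathcal{P}$ makes sense. A single general consequence of (iv) and (ii) is that if $A$ is an interval of a quotient $G/\mathcal{P}$ then the union of the corresponding blocks is an interval of $G$; this is the only fact I need for existence.

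For existence I would start from the partition of $V(G)$ into singletons, whose quotient is $G$ itself, and repeatedly \emph{coarsen}: while the current quotient $Q$ is decomposable it has a proper interval $A$, and merging the blocks indexed by $A$ yields (by the consequence above) a strictly coarser partition into intervals of $G$. Since we only ever merge along a proper --- hence non-total --- interval, each step decreases the quotient size by at least one but keeps it at least $2$, so the process terminates at an indecomposable quotient $K$ with $|K|\geq 2$. Setting the $I_k$ to be the final blocks gives $G=K[I_k:k\in K]$.

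For uniqueness I would first split on the standard trichotomy, using that for $|V(G)|\geq 2$ the graphs $G$ and $\overline{G}$ are never both disconnected. An inflation of $\overline{K_2}$ is disconnected, an inflation of $K_2$ is a join (hence connected with $\overline{G}$ disconnected), and an inflation of an indecomposable $K$ with $|K|\geq 4$ is both connected and co-connected (an indecomposable graph on $\geq 4$ vertices is itself connected and co-connected, and inflation preserves this). Hence $G$ disconnected forces $K=\overline{K_2}$, $\overline{G}$ disconnected forces $K=K_2$, and otherwise $K$ is indecomposable with $|K|\geq 4$; in the first two cases the blocks may be reshuffled freely, which is exactly why uniqueness of the $I_k$ is claimed only for $|K|>2$. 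To finish the case $|K|>2$ I would prove the \emph{inflation--interval correspondence}: when $K$ is indecomposable with $|K|>2$, every interval of $G=K[I_k:k\in K]$ either lies inside a single $I_k$ or equals $V(G)$. Consequently the maximal proper intervals of $G$ are precisely the blocks $I_k$ with $|I_k|\geq 2$, and the remaining vertices (which lie in no proper interval) are the singleton blocks; this pins the partition, and hence $K=G/\{I_k\}$, down to $G$ alone, giving uniqueness of $K$ and of the $I_k$ simultaneously.

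The main obstacle is this correspondence, specifically excluding an interval $M$ that meets several blocks only partially. The key inputs, both consequences of indecomposability, are that the index set $A=\{k:M\cap V(I_k)\neq\emptyset\}$ is an interval of $K$ --- so $|A|\geq 2$ forces $A=V(K)$ --- and that an indecomposable graph on $\geq 4$ vertices has no dominating or isolated vertex, since otherwise the complementary set of vertices would be a proper interval of $K$; the absence of such a vertex is exactly what rules out a block being split while $M$ still meets another block. I expect the bookkeeping in verifying (iii) and in nailing down this partial-meeting step to be the most delicate part, whereas existence and the trichotomy are routine once the arithmetic (i)--(iv) is in hand.
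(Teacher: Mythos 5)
The paper does not actually prove Theorem~\ref{thm-modular-decomp}: it is quoted as classical background (going back to Gallai's work on the substitution decomposition), so there is no in-paper argument to compare yours against. Judged on its own terms, your proof is correct and is essentially the standard one. The existence half --- iterated coarsening of a partition into intervals, justified by the fact that a union of blocks indexed by an interval of the quotient is an interval of $G$, terminating because each merge along a proper interval strictly shrinks the quotient while keeping it of size at least $2$ --- is sound. The uniqueness half correctly isolates the two real issues: the trichotomy (disconnected forces $K=\overline{K_2}$, co-disconnected forces $K=K_2$, and otherwise $K$ is indecomposable on at least $4$ vertices since there are no indecomposable graphs on $3$ vertices), and the inflation--interval correspondence. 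Your sketch of the latter is the right argument and does go through: the trace $A=\{k: M\cap V(I_k)\neq\emptyset\}$ is an interval of $K$, so $|A|\geq 2$ forces $A=V(K)$; and if some block $I_k$ were met but not contained by $M$, then comparing a vertex of $I_k\setminus M$ against a vertex of $M\cap I_k$ and a vertex of $M$ in another block shows $k$ would have to be dominating or isolated in $K$, which is impossible for an indecomposable $K$ on at least $4$ vertices (the complement of such a vertex would be a proper interval). Two small points to be careful about when writing this up in the paper's terminology: singletons are \emph{not} proper intervals here, so your characterisation must (as you do) recover the singleton blocks separately as the vertices lying in no proper interval; and items (i)--(iii) of your interval arithmetic, while true, are never actually used --- only (iv) and its quotient consequence are needed.
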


We will refer to the unique indecomposable graph as the \emph{skeleton} of $G$, denoted $\skel{G}$. Note that the case when $|\skel{G}|=2$ corresponds to graphs with more than one component, or whose complement has more than one component: we call graphs of this form \emph{degenerate}. Since it is already known that degenerate graphs can be reconstructed, we will assume unless stated otherwise that the skeleton of any graph has size at least 4. Consequently, all the decompositions that we will consider are unique, and this enables us to exploit the modular decomposition with relative ease.

Indecomposable graphs have received considerable attention in their own right, as they have some remarkable structural properties. The first evidence of this is due to Schmerl and Trotter:

\begin{theorem}[Schmerl and Trotter~\cite{schmerl:critically-inde:}]\label{thm-schmerl-trotter} Every indecomposable graph on $n\geq 2$ vertices contains an indecomposable induced subgraph with $n-1$ or $n-2$ vertices.\end{theorem}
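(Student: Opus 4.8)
The plan is to prove the theorem in the equivalent ``growing'' form and then read it backwards. The key is an \emph{extension lemma}: if $G$ is indecomposable, $G[X]$ is an indecomposable induced subgraph, and $4 \le |X| < n$, then there is a set $Y$ with $X \subsetneq Y \subseteq V(G)$, $|Y| \le |X|+2$, and $G[Y]$ indecomposable. Granting this, the theorem follows quickly. For $n=4$ any pair of vertices induces an indecomposable subgraph of size $n-2$. For $n \ge 5$, an indecomposable graph on at least four vertices is not a cograph and so contains an induced $P_4$; since $n>4$ this $P_4$ is proper, so a proper indecomposable induced subgraph of size at least $4$ exists. Taking $G[X]$ of maximum size among proper indecomposable induced subgraphs and applying the extension lemma produces an indecomposable $G[Y]$ with $|X| < |Y| \le |X|+2$, which by maximality must be all of $G$; hence $|X| \ge n-2$, so $|X| \in \{n-1,n-2\}$.

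To prove the extension lemma I would first settle when a single added vertex already suffices. Using the standard fact that $M \cap X$ is an interval of $G[X]$ whenever $M$ is an interval of $G[X \cup \{w\}]$, together with the indecomposability of $G[X]$, one obtains a clean dichotomy: $G[X \cup \{w\}]$ is decomposable if and only if either $w$ is \emph{uniform} on $X$ (adjacent to all or to none of $X$) or $w$ is a \emph{twin} of some $x \in X$ (agreeing with $x$ on every vertex of $X \setminus \{x\}$); moreover, when $|X|\ge 4$ this $x$ is unique. If some $w \in V(G)\setminus X$ is of neither type, then $G[X\cup\{w\}]$ is indecomposable and we take $Y = X\cup\{w\}$.

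So assume every vertex of $V(G)\setminus X$ is uniform or a twin. If all were uniform then $X$ would be an interval of $G$, and since $4\le |X| < n$ this interval is proper, contradicting the indecomposability of $G$. Hence some $p \notin X$ is a twin of some $x_0 \in X$, and then $\{x_0,p\}$ is the unique proper interval of $G[X\cup\{p\}]$. The idea is now to add a second vertex that destroys this interval without creating a new one. Because every vertex of $X\setminus\{x_0\}$ treats $x_0$ and $p$ alike and $G$ is indecomposable, $\{x_0,p\}$ is not an interval of $G$, so some $q \in V(G)\setminus X$ is adjacent to exactly one of $x_0,p$; I would set $Y = X\cup\{p,q\}$.

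The verification in this last step is the main obstacle. A proper interval $M$ of $G[Y]$ again meets $X$ in a trivial interval of $G[X]$, confining $M$ to a short list: $X$, $X\cup\{p\}$, $X\cup\{q\}$, a pair $\{a,p\}$, $\{a,q\}$ or $\{p,q\}$, or a triple $\{a,p,q\}$ with $a\in X$. The choice of $q$ rules out the candidates that would force $q$ to agree with $p$ at $x_0$, but the surviving candidates involve the edge between the two \emph{new} vertices $p$ and $q$, which is unconstrained by either vertex's behaviour over $X$; this is exactly where the case analysis becomes delicate. I expect to handle it by choosing $q$ more carefully among the vertices distinguishing $x_0$ and $p$, and if necessary re-selecting the twin $p$, organised as a minimal counterexample: if \emph{every} admissible pair failed, the surviving intervals could be assembled into a single proper interval of $G$ itself, contradicting its indecomposability. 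Pinning down this final contradiction, namely that the obstructions cannot persist across all choices, is the crux of the argument.
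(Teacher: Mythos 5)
The paper does not prove this theorem: it is quoted from Schmerl and Trotter, and the paper's only gesture toward its proof is to record Lemma~\ref{lem-bootstrap} (also without proof) as the engine of their argument. Your reduction is exactly that route: your ``extension lemma'' is Lemma~\ref{lem-bootstrap}, and the derivation of the theorem from it (find an induced $P_4$, take a maximum proper indecomposable induced subgraph, extend by at most two vertices, conclude $|X|\ge n-2$) is correct, as is your one-vertex dichotomy (every $w\notin X$ is either ``uniform'' on $X$, a twin of a unique $x\in X$, or extends $G[X]$ to a larger indecomposable subgraph).

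The gap is that you have not proved the extension lemma, and you say so yourself; unfortunately that lemma is the entire mathematical content of the theorem. Concretely, your recipe ``let $p$ be a twin of $x_0$ and let $q$ be any vertex distinguishing $x_0$ from $p$'' fails: if $q$ happens also to be a twin of $x_0$ (which is compatible with $q$ distinguishing $x_0$ from $p$, since the $pq$-adjacency is unconstrained), then $x_0$, $p$ and $q$ all agree on $X\setminus\{x_0\}$, so $\{x_0,p,q\}$ is a proper interval of $G[X\cup\{p,q\}]$ and the extension is decomposable. Your cases do work when $q$ is uniform on $X$ or is a twin of some $x_1\neq x_0$ (there the uniqueness of twins and the indecomposability of $G[X]$ kill every candidate interval), so the residual problem is precisely the configuration in which \emph{every} vertex separating $x_0$ from $p$ is itself a twin of $x_0$. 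Resolving this requires showing that the set of twins of $x_0$ together with $x_0$ cannot absorb all the separators without producing a proper interval of $G$ itself --- this is the heart of the Ehrenfeucht--Rozenberg/Schmerl--Trotter case analysis, and ``re-selecting $p$ and $q$, organised as a minimal counterexample'' is a statement of intent rather than an argument. Until that case is closed, the proof is incomplete.
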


Moreover, Schmerl and Trotter showed that there was (up to complements) only one family of indecomposable graphs which do not have an indecomposable subgraph on $n-1$ vertices: these are called \emph{critically indecomposable}, and are illustrated in Figure~\ref{fig-crit-indecs}. A central step in Schmerl and Trotter's proof is to apply the following lemma repeatedly:

\begin{figure}
\begin{center}
\begin{tabular}{ccccccccc}
\psset{xunit=0.01in, yunit=0.01in}
\psset{linewidth=0.005in}
\begin{pspicture}(0,0)(40,80)
\Cnode*[fillstyle=solid,radius=0.03in](0,30){A1}
\Cnode*[fillstyle=solid,radius=0.03in](0,50){A2}
\Cnode*[fillstyle=solid,radius=0.03in](40,30){B1}
\Cnode*[fillstyle=solid,radius=0.03in](40,50){B2}
\ncline{-}{A1}{B1}
\ncline{-}{A2}{B1}
\ncline{-}{A2}{B2}
\end{pspicture}
&\rule{15pt}{0pt}&
\psset{xunit=0.01in, yunit=0.01in}
\psset{linewidth=0.005in}
\begin{pspicture}(0,0)(40,80)
\Cnode*[fillstyle=solid,radius=0.03in](0,20){A1}
\Cnode*[fillstyle=solid,radius=0.03in](0,40){A2}
\Cnode*[fillstyle=solid,radius=0.03in](0,60){A3}
\Cnode*[fillstyle=solid,radius=0.03in](40,20){B1}
\Cnode*[fillstyle=solid,radius=0.03in](40,40){B2}
\Cnode*[fillstyle=solid,radius=0.03in](40,60){B3}
\ncline{-}{A1}{B1}
\ncline{-}{A2}{B1}
\ncline{-}{A2}{B2}
\ncline{-}{A3}{B1}
\ncline{-}{A3}{B2}
\ncline{-}{A3}{B3}
\end{pspicture}
&\rule{15pt}{0pt}&
\psset{xunit=0.01in, yunit=0.01in}
\psset{linewidth=0.005in}
\begin{pspicture}(0,0)(40,80)
\Cnode*[fillstyle=solid,radius=0.03in](0,10){A1}
\Cnode*[fillstyle=solid,radius=0.03in](0,30){A2}
\Cnode*[fillstyle=solid,radius=0.03in](0,50){A3}
\Cnode*[fillstyle=solid,radius=0.03in](0,70){A4}
\Cnode*[fillstyle=solid,radius=0.03in](40,10){B1}
\Cnode*[fillstyle=solid,radius=0.03in](40,30){B2}
\Cnode*[fillstyle=solid,radius=0.03in](40,50){B3}
\Cnode*[fillstyle=solid,radius=0.03in](40,70){B4}
\ncline{-}{A1}{B1}
\ncline{-}{A2}{B1}
\ncline{-}{A2}{B2}
\ncline{-}{A3}{B1}
\ncline{-}{A3}{B2}
\ncline{-}{A3}{B3}
\ncline{-}{A4}{B1}
\ncline{-}{A4}{B2}
\ncline{-}{A4}{B3}
\ncline{-}{A4}{B4}
\end{pspicture}
&\rule{15pt}{0pt}&
\psset{xunit=0.01in, yunit=0.01in}
\psset{linewidth=0.005in}
\begin{pspicture}(0,0)(40,80)
\Cnode*[fillstyle=solid,radius=0.03in](0,0){A1}
\Cnode*[fillstyle=solid,radius=0.03in](0,20){A2}
\Cnode*[fillstyle=solid,radius=0.03in](0,40){A3}
\Cnode*[fillstyle=solid,radius=0.03in](0,60){A4}
\Cnode*[fillstyle=solid,radius=0.03in](0,80){A5}
\Cnode*[fillstyle=solid,radius=0.03in](40,0){B1}
\Cnode*[fillstyle=solid,radius=0.03in](40,20){B2}
\Cnode*[fillstyle=solid,radius=0.03in](40,40){B3}
\Cnode*[fillstyle=solid,radius=0.03in](40,60){B4}
\Cnode*[fillstyle=solid,radius=0.03in](40,80){B5}
\ncline{-}{A1}{B1}
\ncline{-}{A2}{B1}
\ncline{-}{A2}{B2}
\ncline{-}{A3}{B1}
\ncline{-}{A3}{B2}
\ncline{-}{A3}{B3}
\ncline{-}{A4}{B1}
\ncline{-}{A4}{B2}
\ncline{-}{A4}{B3}
\ncline{-}{A4}{B4}
\ncline{-}{A5}{B1}
\ncline{-}{A5}{B2}
\ncline{-}{A5}{B3}
\ncline{-}{A5}{B4}
\ncline{-}{A5}{B5}
\end{pspicture}
&\rule{15pt}{0pt}&
\psset{xunit=0.01in, yunit=0.01in}
\psset{linewidth=0.005in}
\begin{pspicture}(0,0)(10,80)
\rput[c](5,40){\dots}
\end{pspicture}
\end{tabular}
\end{center}
\caption{Up to complements, the single infinite family of critically indecomposable graphs.}\label{fig-crit-indecs}
\end{figure}
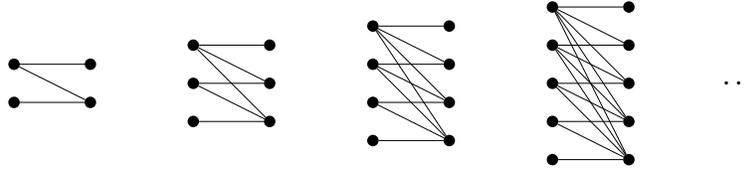

\begin{lemma}\label{lem-bootstrap}Let $G$ be an indecomposable graph on $n\geq 5$ vertices, and let $X$ be a set of vertices with $3\leq |X|\leq n-2$ and such that $G[X]$ is indecomposable. Then there are distinct vertices $u,v\in V(G)\setminus X$ such that $G[X\cup\{u,v\}]$ is indecomposable.\end{lemma}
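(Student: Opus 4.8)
The plan is to reduce everything to an analysis of what happens when a single vertex is adjoined to $X$, and then to combine two outside vertices in such a way that none of the resulting small intervals can survive. First I would prove a single-vertex lemma: for $w\in V(G)\setminus X$, the only possible proper intervals of $G[X\cup\{w\}]$ are $X$ itself and the pairs $\{x,w\}$ with $x\in X$. The argument is short: if $M$ is an interval of $G[X\cup\{w\}]$ then $M\cap X$ is an interval of $G[X]$ (just restrict the defining condition $N(u)\setminus M=N(v)\setminus M$ to $X$), and since $G[X]$ is indecomposable $M\cap X\in\{\emptyset,\{x\},X\}$; tracing the three possibilities leaves exactly $X$ and the pairs $\{x,w\}$. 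I would then record the consequences: $X$ is an interval of $G[X\cup\{w\}]$ iff $w$ is \emph{uniform} (adjacent to all or to none of $X$), and $\{x,w\}$ is an interval iff $w$ agrees with $x$ on $X\setminus\{x\}$; and I would note, again using indecomposability of $G[X]$, that a non-uniform $w$ can be linked in this way to at most one vertex $x$ (two distinct links would force those two vertices of $X$ to be an interval).

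Next I would classify exactly when $G[X\cup\{u,v\}]$ is decomposable, by the same restriction argument applied to a hypothetical interval $M$. This yields a short explicit list of candidate intervals --- namely $\{u,v\}$, the pairs and triple $\{x,u\}$, $\{x,v\}$, $\{x,u,v\}$, and the sets $X$, $X\cup\{u\}$, $X\cup\{v\}$ --- each arising only under a stated adjacency condition. From this list two clean \emph{sufficient} conditions for indecomposability fall out. Case (i): if $u,v$ are both non-uniform, linked to \emph{distinct} vertices $x_u\neq x_v$, and $v$ distinguishes $u$ from $x_u$ while $u$ distinguishes $v$ from $x_v$, then every entry on the list is excluded and $G[X\cup\{u,v\}]$ is indecomposable. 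Case (ii): if $a$ is non-uniform and linked to $x$, and $v$ is uniform and distinguishes $x$ from $a$, then (outside the degenerate situations where $x$ is isolated or dominating in $G[X]$, which also force $v$ to be linked to nothing and $v$ to have a different trace from $a$) the list is again emptied, so $G[X\cup\{a,v\}]$ is indecomposable.

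It then remains to produce a pair fitting (i) or (ii), and this is where the global indecomposability of $G$ enters. Since $|X|\le n-2$, the set $X$ is a proper nonempty interval of $G$ were it an interval at all, so it is not; hence some outside vertex is non-uniform. Fix such an $a$, linked to its unique $x$. Because $\{x,a\}$ is an interval of $G[X\cup\{a\}]$ but not of the indecomposable $G$, and because no vertex of $X\setminus\{x\}$ can distinguish $x$ from $a$, there must be a vertex $v\in V(G)\setminus(X\cup\{a\})$ distinguishing $x$ from $a$. If $v$ is uniform we are essentially in case (ii); if $v$ is non-uniform, linked to some $y\neq x$, and $a$ distinguishes $v$ from $y$, we are in case (i).

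The delicate step --- the one I expect to require the most care --- is ruling out the possibility that \emph{every} distinguisher of $x$ and $a$ is ``badly aligned'', that is, either linked to $x$ itself or linked to some $y\neq x$ on which $a$ and $y$ happen to agree. I would dispose of this by showing that if no admissible distinguisher existed, then a concrete set built from $x$, $a$, and these aligned vertices would itself be an interval of $G$, contradicting indecomposability. The finitely many exceptional configurations arising from an isolated or a dominating vertex of $G[X]$ (there is at most one of each, since two would form an interval) would be checked by hand, and I would use the freedom to pass to the complement --- under which intervals are preserved and the two uniform types are interchanged --- to halve this casework.
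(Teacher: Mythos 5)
The paper does not actually prove this lemma: it is quoted from Schmerl and Trotter (the result goes back to Ehrenfeucht and Rozenberg), so there is no in-paper proof to compare against. Your architecture is the standard one for this result --- classify the proper intervals of $G[X\cup\{w\}]$, deduce the short candidate list for $G[X\cup\{u,v\}]$, then use indecomposability of $G$ to produce a good pair --- and your local computations in the first two steps are correct (including the observation that each outside vertex is linked to at most one $x\in X$, and the exclusion of $\{u,v\}$ in your case (i), which follows because $u\equiv v$ on $X$ would force $\{x_u,x_v\}$ to be an interval of $G[X]$).

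However, there are two genuine gaps. First, step 3 tacitly assumes that every non-uniform outside vertex is linked to some $x\in X$. That is false: a non-uniform, unlinked vertex $a$ is exactly one for which $G[X\cup\{a\}]$ is already indecomposable, and this case is not covered by either of your sufficient conditions (i) or (ii). It is not hard to finish from such an $a$ --- the same single-vertex analysis applied to $X\cup\{a\}$ shows $G[X\cup\{a,v\}]$ is indecomposable unless $v$ is uniform over, or linked into, $X\cup\{a\}$ --- but then you are back to needing the global argument for the enlarged set, so the case must be addressed explicitly. Second, and more seriously, the ``delicate step'' you defer is the entire content of the lemma, and the sketch does not survive scrutiny as stated: the natural candidate interval (say $\{x,a\}$ together with all vertices linked to $x$) is \emph{not} obviously an interval of $G$, because a badly aligned distinguisher of type (B) or (C) in your taxonomy lies outside that set yet still separates $x$ from $a$. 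The actual argument has to juggle all three classes (uniform, linked, extending) simultaneously and extract an interval of $G$ from the assumption that \emph{every} pair fails; this is a genuine case analysis of a page or so, not a one-line observation, and until it is written out the proof is incomplete.
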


As an example of its use, we can combine it with the following lemma to obtain a mild strengthening of Theorem~\ref{thm-schmerl-trotter}:

\begin{lemma}[Ill\'e~\cite{ille:indecomposable-:}]\label{lem-specific-vertex} Let $G$ be an indecomposable graph on $n\geq 6$ vertices, and let $v$ be any vertex of $G$. Then there exists a set $X\subseteq V(G)$ such that $v\in X$ and $3\leq |X| < n$, and $G[X]$ is indecomposable.\end{lemma}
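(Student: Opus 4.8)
The plan is to locate a proper indecomposable induced subgraph through $v$ by first finding a large indecomposable subgraph \emph{avoiding} $v$ and then forcing $v$ into an indecomposable configuration. Since indecomposability and the conclusion are preserved under complementation, I am free to pass to $\overline G$ whenever convenient. Note also that there is no indecomposable graph on three vertices, so any indecomposable $G[X]$ with $|X|\ge 3$ in fact has $|X|\ge 4$, and since $n\ge 6$ we have $n-2\ge 4$.

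First I would apply Theorem~\ref{thm-schmerl-trotter} to obtain an indecomposable $G[Y]$ with $|Y|\in\{n-1,n-2\}$. If $v\in Y$ then $Y$ already works, so assume $v\notin Y$; in particular the family
$$\mathcal F=\{\,W\subseteq V(G): v\notin W,\ |W|\ge 4,\ G[W]\text{ indecomposable}\,\}$$
is nonempty. Choose $W\in\mathcal F$ of maximum size. If $|W|\le n-3$, then Lemma~\ref{lem-bootstrap} applies to $W$ and produces distinct $a,b\notin W$ with $G[W\cup\{a,b\}]$ indecomposable; by maximality of $W$ we cannot have $a,b\ne v$ (otherwise $W\cup\{a,b\}\in\mathcal F$ would be larger), so $v\in\{a,b\}$ and $X=W\cup\{a,b\}$ is a proper indecomposable subgraph of size at most $n-1$ containing $v$. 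This disposes of the generic case and reduces matters to $|W|\in\{n-2,n-1\}$.

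The second ingredient is a one-vertex extension dichotomy: if $G[Z]$ is indecomposable and $v\notin Z$, then either $G[Z\cup\{v\}]$ is indecomposable, or $Z$ is an interval of $G[Z\cup\{v\}]$ (that is, $v$ is complete or anticomplete to $Z$), or there is a unique $z\in Z$ with $\{v,z\}$ an interval of $G[Z\cup\{v\}]$. This follows because any nontrivial interval of $G[Z\cup\{v\}]$ meets $Z$ in an interval of the indecomposable graph $G[Z]$, hence in $\emptyset$, a singleton or all of $Z$, which leaves exactly these three possibilities. In the first case $G[Z\cup\{v\}]$ (proper whenever $Z\ne V(G)\setminus\{v\}$) is the desired subgraph, and in the twin case I would use the \emph{twin trick}: since $v$ and $z$ are interchangeable in $G[Z\cup\{v\}]$, the graph $G[(Z\setminus\{z\})\cup\{v\}]$ is isomorphic to $G[Z]$, hence indecomposable, contains $v$, and has size at most $n-1$. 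Thus whenever $v$ \emph{splits} some proper indecomposable subgraph avoiding it, or is twinned with a vertex of one, we win.

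The main obstacle is therefore the \emph{interval case}: the situation in which $v$ is complete or anticomplete to every proper indecomposable subgraph that avoids it (with $G-v$, the one remaining indecomposable candidate, carrying no dichotomy information because $G$ itself is indecomposable). Here I would argue that this cannot persist: were $v$ complete or anticomplete to \emph{every} indecomposable subgraph avoiding it, then, climbing a chain of indecomposable subgraphs from a small one up toward $G-v$ via Lemma~\ref{lem-bootstrap}, $v$ would be complete or anticomplete to all of $V(G)\setminus\{v\}$, making $v$ universal or isolated and $G$ decomposable, a contradiction. Converting this into an honest proof is the delicate part: one must produce a \emph{proper} indecomposable subgraph that $v$ splits while respecting the size bounds (the cases $|W|=n-2$ and $G-v$ indecomposable are precisely where the extension $Z\cup\{v\}$ threatens to exhaust $V(G)$), and in the boundary subcase $|W|=n-2$ one extracts from the maximality of $W$ a twin pair $\{w,z\}$ in $G-v$ and then shows that deleting $z$ leaves an indecomposable subgraph containing $v$. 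This is where $n\ge 6$ is essential: at $n=5$ the apex of the bull (the rightmost graph of Figure~\ref{fig-small-indec}) lies in no proper indecomposable subgraph, so the statement is genuinely tight and this boundary analysis cannot be avoided.
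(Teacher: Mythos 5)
The paper offers no proof of this lemma --- it is quoted from Ill\'e --- so your proposal has to stand on its own. Its first half does: the choice of a maximum indecomposable $W$ avoiding $v$, the use of Lemma~\ref{lem-bootstrap} when $|W|\le n-3$, the one-vertex extension trichotomy, and the twin trick are all correct. The gap is exactly where you flag it, and it is not merely a delicate boundary: your claim that the interval case ``cannot persist'' --- that if $v$ were complete or anticomplete to every proper indecomposable subgraph avoiding it then $v$ would be universal or isolated --- is false. Take $G$ to be the net on $6$ vertices (a triangle $xyz$ with pendant vertices $x',y',z'$ attached to $x,y,z$ respectively) and let $v=z'$. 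Then $G-v$ is the bull (the rightmost graph of Figure~\ref{fig-small-indec}), which is indecomposable, so $|W|=n-1$ and your analysis descends to an indecomposable $W'$ avoiding $v$ with $|W'|\le n-2=4$; the only such $W'$ is the $P_4$ on $\{x',x,y,y'\}$, and $v$ is anticomplete to it. There are no twins anywhere, $v$ is not isolated (it sees $z$), and $G$ is indecomposable, so no contradiction arises and none of your three winning moves ever fires.

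The structural reason is that every set your method can output is of the form $Z\cup\{v\}$ with $Z$ indecomposable, or a twin-swap of such a $Z$, whereas the lemma is true for reasons outside that reach: in the net, $\{z',z,x,x'\}$ induces a $P_4$ containing $v$, but deleting $v$ from it leaves a decomposable $P_3$. So the genuinely hard case --- $v$ complete or anticomplete to the relevant $W$ --- requires constructing an indecomposable set through $v$ from scratch, by analysing how the vertices outside $W$ (those extending $W$, those twinned into $W$, and those uniform on $W$) attach to $v$ and to each other. That construction is the real content of Ill\'e's lemma and is absent from the proposal. Your closing observation that $n\ge 6$ is tight (the hump of the bull lies in no proper indecomposable subgraph) is correct and shows you see where the difficulty lives, but it does not supply the missing argument.
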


\begin{corollary}\label{cor-indecomposable-subgraph}Let $G$ be an indecomposable graph with $n\geq 6$ vertices, and let $v$ be any vertex of $G$. Then there exists a set $X$ on $n-1$ or $n-2$ vertices, such that $v\in X$ and $G[X]$ is indecomposable.\end{corollary}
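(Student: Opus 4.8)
The plan is to read this corollary as the \emph{pointed} version of Theorem~\ref{thm-schmerl-trotter}: where Schmerl and Trotter produce some indecomposable induced subgraph on $n-1$ or $n-2$ vertices, here we must produce one that additionally contains the prescribed vertex $v$. The natural strategy is therefore to seed the construction with an indecomposable subgraph that is guaranteed to contain $v$, and then enlarge it by the bootstrap lemma, which never deletes vertices and so preserves membership of $v$.

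Concretely, I would first invoke Lemma~\ref{lem-specific-vertex} (valid since $n\geq 6$) to obtain a set $X_0$ with $v\in X_0$, $3\leq|X_0|<n$, and $G[X_0]$ indecomposable. If $|X_0|\in\{n-1,n-2\}$ we are already done, so assume $|X_0|\leq n-3$. I would then iterate the following step: while the current set $X$ satisfies $|X|\leq n-3$, apply Lemma~\ref{lem-bootstrap} to find two distinct vertices $u,w\in V(G)\setminus X$ with $G[X\cup\{u,w\}]$ indecomposable, and replace $X$ by $X\cup\{u,w\}$. Since each step only adds vertices, $v$ remains in $X$ and $G[X]$ stays indecomposable at every stage.

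Two pieces of bookkeeping then complete the argument. First, the hypotheses of Lemma~\ref{lem-bootstrap} hold at every application: whenever a step is taken the current set satisfies $3\leq|X|\leq n-3\leq n-2$, and $n\geq 6\geq 5$. Second, the process terminates with $|X|\in\{n-1,n-2\}$: each step raises $|X|$ by exactly two, and a step is taken only when $|X|\leq n-3$, so the final set has size $s+2$ for some $s\leq n-3$, giving $|X|\leq n-1$; it is also at least $n-2$, since otherwise a further step would be taken. Hence $|X|\in\{n-1,n-2\}$ on termination, as required.

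The only genuine subtlety — the point I would check most carefully — is this stopping argument. Because the bootstrap lemma forces growth in increments of two, one must rule out overshooting from $n-3$ directly past both target sizes to $n$; the stopping rule $|X|\leq n-3$ is precisely what guarantees the final size lands in $\{n-1,n-2\}$ regardless of the parity of $|X_0|$. Everything else is a direct, routine application of the two cited lemmas.
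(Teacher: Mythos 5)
Your proof is correct and is exactly the argument the paper intends: the corollary is stated without an explicit proof, but the surrounding text says it follows by combining Lemma~\ref{lem-bootstrap} with Lemma~\ref{lem-specific-vertex}, which is precisely your seed-and-bootstrap construction. Your bookkeeping on the hypotheses and the stopping size is accurate, so nothing is missing.
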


As mentioned in the introduction, it has been shown that indecomposable graphs are recognisable:

\begin{theorem}[Ill\'e~\cite{ille:recognition-problem:}]
Let $G$ and $H$ be graphs for which $D(G)=D(H)$. Then $G$ is indecomposable if and only if $H$ is indecomposable.\end{theorem}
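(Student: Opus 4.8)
The plan is to prove the sharper, symmetric statement that no indecomposable graph $G$ and decomposable graph $H$ can share a deck; since the assertion is symmetric in $G$ and $H$, this suffices. Throughout set $n=|V(G)|=|V(H)|$, recovered as the number of cards. The finitely many small instances (say $n\le 6$) I would dispose of by direct inspection, using that the indecomposable graphs on at most $5$ vertices are those of Figure~\ref{fig-small-indec}; so assume $n\ge 7$. The single piece of deck data I extract is the number $a$ of cards that are indecomposable: as indecomposability of a graph on fewer than $n$ vertices is intrinsic to the card, $a$ is read off the deck directly, so $a(G)=a(H)=:a$.

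The backbone is a structural dichotomy for a decomposable $H$ that nonetheless has an indecomposable card $H-w$. Examining any proper interval $I$ of $H$: if $w\notin I$ then $I$ remains a module of $H-w$ and is proper there unless $I=V(H)\setminus\{w\}$, which forces $w$ to be universal or isolated; while if $w\in I$ then $I\setminus\{w\}$ is a module of $H-w$, proper unless $|I|=2$. Thus either $H$ is \emph{degenerate} (some vertex is universal or isolated, so $H$ or its complement is disconnected), in which case $H$ is reconstructible by the classical result quoted in the introduction and hence $H\cong G$ is indecomposable, a contradiction; or every proper interval of $H$ is a size-$2$ twin pair through $w$. In the latter case two such pairs would overlap in $w$ and yield a size-$3$ module, so the proper interval is \emph{unique}, equal to a twin pair $\{w,w'\}$; then $\skel{H}$ has order $n-1$, the only indecomposable cards are $H-w\cong H-w'\cong\skel{H}$, whence $a=2$, and every decomposable card of the common deck carries a module of size two, namely the image of $\{w,w'\}$.

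It remains to contradict these conclusions on the side of $G$. If $a=0$ then $G$, having no indecomposable card, is \emph{critically indecomposable} by Theorem~\ref{thm-schmerl-trotter}, hence up to complementation one of the half-graphs of Figure~\ref{fig-crit-indecs}; here I would use their explicit structure and rigid degree sequence $1,1,2,2,\dots,\lfloor n/2\rfloor,\lfloor n/2\rfloor$ to show they are reconstructible, so that again $H\cong G$ is indecomposable. Otherwise $a\ge 1$, Theorem~\ref{thm-schmerl-trotter} hands $G$ an indecomposable card, the dichotomy applies and forces $a=2$ with both indecomposable cards of $G$ isomorphic to $\skel{H}=:K'$; write $G-u_1\cong G-u_2\cong K'$. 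Transferring the module information across the common deck, every card $G-v$ with $v\notin\{u_1,u_2\}$ has a pair of twins $\{p_v,q_v\}$; since $G$ is indecomposable this pair is not a module of $G$, so $v$ is the \emph{unique} vertex distinguishing $p_v$ from $q_v$. I would then analyse how $u_1,u_2$ attach to the indecomposable $(n-1)$-card, using Corollary~\ref{cor-indecomposable-subgraph} and the bootstrap Lemma~\ref{lem-bootstrap} to control the neighbourhoods of the $p_v,q_v$ and show that this near-twin structure, together with the two prescribed isomorphism types, cannot coexist with $G$ being indecomposable once $n$ is large.

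I expect this last step, ruling out an indecomposable $G$ whose deck coincides with that of a twin-doubled $H$, to be the main obstacle. The example $G=P_5$, which is indecomposable yet has exactly two indecomposable cards, both $\cong P_4$, shows that one cannot extract a contradiction from the count $a=2$ alone: the argument must exploit the full isomorphism types of the cards and the near-twin data above. This is precisely where the delicate combinatorics of indecomposable graphs, via the Schmerl--Trotter machinery, genuinely enters.
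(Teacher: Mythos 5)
This theorem is quoted by the paper from Ill\'e's work and is not proved in the text, so there is no in-paper argument to compare against; judged on its own terms, your proposal has a genuine gap at its crux. The reduction you set up is correct and useful: for a decomposable $H$ with an indecomposable card $H-w$, every proper interval either equals $V(H)\setminus\{w\}$ (making $H$ degenerate, hence reconstructible by the classical disconnected-graph result, a contradiction) or is a pair through $w$, and overlapping pairs would merge into a larger module, so $H$ has a unique proper interval $\{w,w'\}$, exactly two indecomposable cards both isomorphic to its skeleton $K'$, and a size-two module in every other card; transferring this across the deck correctly shows that each card $G-v$ with $v\notin\{u_1,u_2\}$ contains a pair split in $G$ only by $v$. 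But the entire difficulty of Ill\'e's theorem is concentrated in the step you then defer: showing that an \emph{indecomposable} $G$ cannot realise this data. You only indicate that you ``would analyse'' how $u_1,u_2$ attach, invoking Lemma~\ref{lem-bootstrap} and Corollary~\ref{cor-indecomposable-subgraph} without saying how, and your own $P_5$ example shows that everything you have actually derived (the count $a=2$, the isomorphism type of the two indecomposable cards, the near-twin pairs) can coexist with indecomposability. What you have is a reduction of the theorem to its hardest case, not a proof of it.

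A secondary gap: the branch $a=0$ rests on the unproved assertion that the critically indecomposable graphs of Figure~\ref{fig-crit-indecs} are reconstructible. This is plausible (they are connected chain graphs, so for $n\geq 6$ the class is recognisable from the deck by Kelly's counting lemma applied to the finite forbidden-induced-subgraph description, and members are determined by their degree sequences), but it is a lemma requiring its own proof, not an aside; and note that in this branch the dichotomy for $H$ gives you nothing, so the reconstructibility of $G$ is carrying the entire argument there. Until the $a=2$ case is closed and the $a=0$ lemma is written out, the proposal does not establish the statement.
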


Consequently, in this note we will assume that the graphs we reconstruct have already been recognised as decomposable.

Finally, we mention one result due to Basso-Gerbelli and Ill\'e~\cite{basso:la-reconstruction:}. It is closely related to ours, although we do not need it in the sequel. They state that if $G$ and $H$ are decomposable graphs each with at least two non-trivial intervals and for which $D(G)=D(H)$, then $\skel{G}\cong\skel{H}$ and there is a $1-1$ correspondence between the maximal non-trivial intervals of $G$ and $H$.

\section{Recovering the skeleton and intervals}\label{sec-skel-recon}

Recovering the skeleton is a relatively straightforward procedure, relying on the following lemma.

\begin{lemma}\label{lem-deck-skeletons}Let $G$ be any graph. Then for every $G-v\in D(G)$ we have $\skel{G-v}\leq\skel{G}$.\end{lemma}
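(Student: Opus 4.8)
The plan is to take the modular decomposition $G = \skel{G}[I_k : k \in \skel{G}]$ and track what happens to the skeleton when a single vertex $v$ is deleted. Write $k_0$ for the vertex of $\skel{G}$ whose interval $I_{k_0}$ contains $v$. The deletion only affects the single interval $I_{k_0}$, replacing it with $I_{k_0} - v$ and leaving every other interval untouched; the way these intervals attach to one another (i.e.\ the adjacencies dictated by $\skel{G}$) is completely unchanged. So I would first record that
\[
G - v = \skel{G}\bigl[I_{k_0}-v,\ I_k \ (k \neq k_0)\bigr],
\]
where the block indexed by $k_0$ may now be empty (if $I_{k_0}$ was a singleton) or may itself have become decomposable or changed shape.

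The next step is to split into cases according to whether $I_{k_0} - v$ is empty or not. If $I_{k_0} - v$ is nonempty, then the sets $V(I_{k_0}-v)$ and $V(I_k)$ for $k \neq k_0$ are still intervals of $G - v$, and they partition $V(G-v)$ into blocks that attach to one another exactly as in $\skel{G}$. Contracting each such block to a single vertex therefore exhibits $\skel{G}$ itself as an induced-subgraph-respecting quotient, and since an induced subgraph obtained by picking one representative vertex from each block is isomorphic to $\skel{G}$, we get $\skel{G} \leq \skel{G-v}$ in the strong sense that $\skel{G}$ is realised on a transversal of the blocks. The point is that any interval partition of $G-v$ refining these blocks computes $\skel{G-v}$, and $\skel{G}$ appears as the induced subgraph of $G-v$ on any transversal; since that transversal is itself indecomposable (it is a copy of $\skel{G}$) it must embed into $\skel{G-v}$. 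If instead $I_{k_0} - v$ is empty, then $G - v$ is the inflation of $\skel{G} - k_0$ by the remaining intervals, and one must check that $\skel{G} - k_0 \leq \skel{G-v}$ together with $\skel{G}-k_0 \le \skel{G}$; the latter is immediate and the former follows by the same transversal argument applied to $\skel{G}-k_0$.

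The cleanest way to package both cases is to observe that a transversal of the blocks of the decomposition of $G-v$ (one vertex from $I_{k_0}-v$ when it is nonempty, one from each other $I_k$) induces a copy of an indecomposable graph $K'$ which is either $\skel{G}$ or $\skel{G} - k_0$, and in both cases $K' \leq \skel{G}$. Since $K'$ is an indecomposable induced subgraph of $G-v$, and the skeleton $\skel{G-v}$ is the (unique) indecomposable graph onto which $G-v$ decomposes, I claim $K' \le \skel{G-v}$: picking a transversal of the intervals of $G-v$ recovers $\skel{G-v}$, and the blocks of $G-v$ used to define $K'$ are unions of (or equal to) intervals of the finer decomposition, so the $K'$-transversal embeds. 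Chaining $\skel{G-v} \geq K'$ with $K' \le \skel{G}$ would be the wrong direction, so the real content is to show directly $\skel{G-v} \leq \skel{G}$ by verifying that the skeleton of $G-v$, read off from its unique decomposition, is an induced subgraph of the skeleton of $G$.

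The main obstacle is precisely this last point: relating $\skel{G-v}$ to $\skel{G}$ requires arguing that deleting one vertex cannot create \emph{new} skeletal structure beyond what $\skel{G}$ already contains. The subtlety is that $I_{k_0} - v$ might decompose further or its attachment might, in degenerate configurations, merge with a neighbouring block, so that $\skel{G-v}$ is genuinely a proper induced subgraph rather than all of $\skel{G}$. I would handle this by taking the canonical (coarsest) modular partition of $G - v$ and showing each of its parts is contained in one of the blocks $\{V(I_{k_0}-v)\} \cup \{V(I_k) : k \neq k_0\}$; the quotient map from the $G-v$-partition to this block partition is then a surjective homomorphism of indecomposable graphs realised as an induced subgraph inclusion, giving $\skel{G-v} \leq \skel{G}$. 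Verifying that the canonical partition of $G-v$ refines the block partition — equivalently, that no module of $G-v$ crosses two distinct original intervals — is the delicate step, and it is exactly where the fact that the original intervals remain modules of $G-v$ must be used.
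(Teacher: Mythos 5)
Your setup and case split (on whether $I_{k_0}-v$ is empty) match the paper's, but the argument never closes. The entire content of the lemma is the identification of $\skel{G-v}$, and the tool for that is the uniqueness assertion of Theorem~\ref{thm-modular-decomp}: when $I_{k_0}-v\neq\emptyset$, your displayed equation exhibits $G-v$ as an inflation of the indecomposable graph $K=\skel{G}$, so uniqueness gives $\skel{G-v}=K$ immediately; when $I_{k_0}=\{v\}$, $G-v$ is an inflation of $K-k_0$, hence of $\skel{K-k_0}$, and uniqueness gives $\skel{G-v}=\skel{K-k_0}\leq K-k_0\leq K$. You never invoke this. Instead the first case is spent on a transversal argument whose conclusion ($\skel{G}$ embeds in $G-v$) you yourself note points in the wrong direction, and the real work is deferred to a final ``delicate step'' that is announced but not carried out.

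That deferred step is also misstated. You propose to verify that no module of $G-v$ crosses two distinct blocks $V(I_k)$. For an arbitrary modular partition this is simply false --- in $K_3$ with parts $\{a,b\}$ and $\{c\}$, the set $\{b,c\}$ is a module crossing a part --- and the reason it holds in your situation is precisely that the quotient $K$ is indecomposable with $|K|\geq 4$, which is the content of the uniqueness half of Theorem~\ref{thm-modular-decomp}. So the one nontrivial claim in your proof is left unproved, and the route you sketch for proving it would amount to re-deriving a theorem the paper already provides. Replacing the transversal/refinement discussion with a one-line appeal to the uniqueness of the modular decomposition turns your outline into the paper's proof.
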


\begin{proof}
Let $G=K[I_k:k\in K]$ be the modular decomposition of $G$, and consider any card $G-v$, where $v$ lies in $I_{k^*}$ for some $k^*\in K$. If $|I_{k^*}|>1$ then the result follows immediately since $G-v=K[I'_k:k\in K]$ where $I'_k=I_k$ for $k\neq k^*$ and $I'_{k^*}=I_{k^*}-v$.

Thus we may assume that $I_{k^*}$ contains only the vertex $v$. However, we may then write $G-v= (K-k^*)[I_k:k\in K-k^*]$, an inflation of $K-k^*$. If $K-k^*$ is indecomposable, then $\skel{G-v}=K-k^*$, otherwise we have $\skel{G-v}=\skel{K-k^*}\leq K$.\end{proof}

\begin{theorem}\label{thm-skel-recon}Let $G$ be a decomposable graph. Then $\skel{G}$ is reconstructible.\end{theorem}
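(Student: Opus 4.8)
The plan is to compute the skeleton of every card and then argue that $\skel{G}$ is precisely a skeleton of largest order occurring among them. Since the skeleton of a graph is determined by its isomorphism type (Theorem~\ref{thm-modular-decomp}), the multiset $\{\skel{C}:C\in D(G)\}$ can be read directly off the deck. Thus it suffices to establish two facts: that no card has a skeleton on more than $|\skel{G}|$ vertices, and that at least one card attains this bound with skeleton isomorphic to $\skel{G}$.

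The first fact is immediate from Lemma~\ref{lem-deck-skeletons}: for every card $C=G-v$ we have $\skel{C}\leq\skel{G}$, so in particular $|\skel{C}|\leq|\skel{G}|$. Moreover, equality $|\skel{C}|=|\skel{G}|$ forces $\skel{C}\cong\skel{G}$, since an induced subgraph spanning the whole vertex set is the graph itself. Hence any card whose skeleton has maximum order has skeleton exactly $\skel{G}$.

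For the second fact I would invoke decomposability. Write $G=K[I_k:k\in K]$ with $K=\skel{G}$ indecomposable and $|K|\geq 4$. If every $I_k$ were a singleton then $G\cong K$ would itself be indecomposable, contradicting the assumption that $G$ is decomposable; hence some interval $I_{k^*}$ contains at least two vertices. Deleting a vertex $v\in I_{k^*}$ leaves all other intervals intact and shrinks $I_{k^*}$ to a still-nonempty interval, so $G-v=K[I'_k:k\in K]$ remains an inflation of $K$ and therefore $\skel{G-v}=K=\skel{G}$. This is exactly the first case treated in the proof of Lemma~\ref{lem-deck-skeletons}.

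Combining the two facts, $\skel{G}$ is recovered as the isomorphism type of any skeleton of maximum order appearing in the deck, and this quantity is computable from $D(G)$ alone. I do not expect a genuine obstacle here: the only step requiring care is the guarantee that a non-singleton interval exists, which is precisely what decomposability together with the standing assumption $|\skel{G}|\geq 4$ supplies.
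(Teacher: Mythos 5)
Your proposal is correct and follows essentially the same route as the paper: the upper bound comes from Lemma~\ref{lem-deck-skeletons}, and equality is attained at any card obtained by deleting a vertex from a non-singleton maximal interval, which exists precisely because $G$ is decomposable. The extra detail you supply (that $\skel{C}\leq\skel{G}$ together with $|\skel{C}|=|\skel{G}|$ forces $\skel{C}\cong\skel{G}$) is a point the paper leaves implicit in the phrase ``unique largest skeleton,'' but the argument is the same.
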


\begin{proof}By Lemma~\ref{lem-deck-skeletons} every $G-v\in D(G)$ satisfies $\skel{G-v}\leq\skel{G}$. Moreover, since $G$ is decomposable, there exists some maximal proper interval $I$ of $G$ containing at least two vertices, and for any $v\in I$ we have $\skel{G-v}=\skel{G}$. Hence $\skel{G}$ can be obtained by taking the unique largest skeleton of all the cards in the deck.
\end{proof}

Now we know the skeleton of $G$, we need to recover the list of maximal proper intervals. Let $s(G)$ denote the number of singleton intervals in the modular decomposition of $G$, so that $s(G)=|G|$ if and only if $G$ is indecomposable. Note that $s(G)$ is reconstructible, as $s(G) = |\{G-v\in D(G): \skel{G-v}\neq\skel{G}\}|$.

The recovery of the maximal proper intervals divides into three cases:
(1) $G$ has at least two non-trivial maximal intervals; (2) $G$ has exactly one non-trivial maximal interval $I_{k^*}$ with $|I_{k^*}|\geq 3$; and (3) $G$ has exactly one non-trivial maximal interval $I_{k^*}$ with $|I_{k^*}|=2$. The next three lemmas will cover each of these cases in turn.

\begin{lemma}\label{lem-multi-interval-recon}Let $G$ be a decomposable graph for which $|\skel{G}|-s(G)\geq 2$. Then the set of maximal proper intervals of $G$ is reconstructible. Moreover, the intervals belonging to each orbit of the automorphism group of $\skel{G}$ can be identified. \end{lemma}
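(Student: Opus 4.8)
The plan is to exploit the fact that, on the cards where the skeleton survives, the (unique) modular decomposition reveals almost the entire interval structure, and then to recover the true list of maximal intervals by a ``peeling from the top'' argument reminiscent of the reconstruction of disconnected graphs. Write $G=\skel{G}[I_k:k\in\skel{G}]$, and call a card $G-v$ \emph{good} if $\skel{G-v}=\skel{G}$. By the proof of Lemma~\ref{lem-deck-skeletons} these are exactly the cards obtained by deleting a vertex $v$ from a non-trivial interval $I_{k_0}$, in which case $G-v=\skel{G}[I'_k]$ with $I'_{k_0}=I_{k_0}-v$ and $I'_k=I_k$ for $k\neq k_0$. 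Good cards are recognisable, since $\skel{G}$ is known (Theorem~\ref{thm-skel-recon}), and because $|\skel{G}|\geq 4>2$ their modular decomposition is unique (Theorem~\ref{thm-modular-decomp}). Hence from each good card I can read off the multiset of its intervals together with the vertex of $\skel{G}$ to which each is attached. Since the identification of $\skel{G-v}$ with $\skel{G}$ is only canonical up to an automorphism, this attachment is determined only up to the action of $\mathrm{Aut}(\skel{G})$ on $V(\skel{G})$; this is precisely why the conclusion identifies intervals only up to orbits, and it is also what makes that refined conclusion attainable.

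Because $G$ is decomposable with $|\skel{G}|-s(G)\geq 2$, it has $m:=|\skel{G}|-s(G)\geq 2$ non-trivial maximal intervals, and $\sum_{|I_k|\geq 2}|I_k|=|G|-s(G)$; both $m$ and this sum are reconstructible, as $s(G)$ is. Since $m\geq 2$, for every non-trivial interval there is a good card (delete a vertex from a \emph{different} non-trivial interval) in which that interval survives intact. In particular the largest size $M$ of a maximal interval equals the largest interval size occurring in any good card, so $M$ is reconstructible.

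Now I would peel by decreasing size. The key point is that deleting a vertex can never \emph{create} an interval of the current maximum size $M$, since every affected interval only shrinks. Thus for each isomorphism type $T$ with $|T|=M$ and each orbit $O$, the number of size-$M$ intervals of type $T$ attached to $O$ in a good card never exceeds the corresponding number $c(T,O)$ in $G$, with equality exactly when the deleted vertex lies outside every such interval. Taking the maximum over all good cards therefore recovers $c(T,O)$, provided some non-trivial interval is not of this type and orbit; the sole exception, in which all $m$ non-trivial intervals share one common type, orbit and size, is flagged by the identity $|G|-s(G)=mM$ and corrected using the known value of $m$. Having fixed all size-$M$ intervals with their orbits, I can recognise which good cards were produced by damaging a size-$M$ interval (they display one fewer size-$M$ interval), and I repeat the argument at size $M-1$ using only those good cards that damage no interval of size $\geq M-1$. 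Inducting downward reconstructs the full multiset of maximal proper intervals together with their $\mathrm{Aut}(\skel{G})$-orbits.

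I expect the main obstacle to be exactly this bookkeeping: separating the genuine intervals of a given size from those spuriously manufactured by shrinking a larger interval, and from the genuine ones lost by shrinking them below that size. Working strictly from the largest size downward is what keeps this under control, since shrinking can never produce an interval of the current top size, and the global constraints $\sum c(T,O)=m$ and $\sum |T|\,c(T,O)=|G|-s(G)$ resolve the degenerate cases in which every non-trivial interval looks alike.
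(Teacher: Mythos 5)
Your setup matches the paper's: you identify the good cards $D_K(G)$ (where $K=\skel{G}$), read off each card's maximal intervals together with their attachment to $K$ up to $\mathrm{Aut}(K)$, and use the hypothesis $|\skel{G}|-s(G)\geq 2$ to guarantee that every non-trivial interval survives intact on some good card; your treatment of the top size $M$ is sound. The gap is in the descent to smaller sizes. First, the restriction to ``good cards that damage no interval of size $\geq M-1$'' is not implementable at that stage --- you cannot recognise which cards damage a size-$(M-1)$ interval before you know what the size-$(M-1)$ intervals are --- and the restricted set can be empty (e.g.\ when every non-trivial interval has size $M$ or $M-1$). More seriously, even with the workable restriction (exclude only cards damaging an interval of size greater than the current stage, which are recognisable by induction on the known larger counts), the procedure can fail to determine the \emph{isomorphism type} of a smaller interval. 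Take $G=K[I_k:k\in K]$ with exactly two non-trivial intervals, $I_1$ of size $5$ and $I_2$ of size $4$. At stage $4$ your restricted card set consists precisely of the cards deleting a vertex of $I_2$, and on every one of them $I_2$ appears damaged, so every max-based count at size $4$ is $0$. Your global constraints $\sum c(T,O)=m$ and $\sum|T|\,c(T,O)=|G|-s(G)$ do detect that one interval of size $4$ is missing from the tally, but they are purely numerical and cannot tell you which graph it is. The interval $I_2$ appears intact only on the excluded cards (those deleting a vertex of $I_1$), where it sits alongside the spurious size-$4$ interval $I_1-v$; if $I_1$ and $I_2$ occupy the same orbit of $K$, you have no stated mechanism for telling the genuine $I_2$ apart from the damaged copies of $I_1$.

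The missing idea is exactly the paper's ``attributability'' step: pool the maximal proper intervals of all cards in $D_K(G)$ into a single multiset, observe that this multiset consists of the genuine intervals with computable multiplicities together with one copy of the deck $D(I_i)$ for each $I_i$, and peel from the top by repeatedly extracting the largest remaining graph (necessarily a genuine interval, since $m\geq 2$ guarantees an intact copy) and deleting the elements attributable to it, namely its deck $D(I_i)$. It is this subtraction of $D(I_i)$ --- not a maximum over cards plus counting --- that disentangles genuine size-$M'$ intervals from damaged copies of size-$(M'+1)$ intervals lying in the same orbit: in the example above, the pooled size-$4$ intervals at the common orbit form $D(I_1)$ together with five copies of $I_2$, and since $I_1$ is already known one subtracts $D(I_1)$ and reads off $I_2$. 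Your top-down ordering is the right instinct, but without the deck subtraction the descent does not go through.
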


\begin{proof}
First set $K=\skel{G}$, define $D_K(G)=\{G-v\in D(G) : \skel{G-v}=K\}$, and let $I_1,I_2,\ldots,I_{|K|-s(G)}$ be the non-singleton maximal proper intervals of $G$, in some (arbitrary) order. Consider all the maximal proper intervals in the elements of $D_K(G)$. This consists of
\begin{itemize}
\item $s(G)(|G|-s(G))$ copies of the $s(G)$ singleton intervals in $G$,
\item $|G|-s(G)-|I_i|$ copies of each $I_i$, and
\item $D(I_i)$ for each $I_i$.
\end{itemize}
We now recover $I_1,\ldots,I_{|K|-s(G)}$ by attributability: repeatedly take any largest maximal interval graph $I_i$ from the list of maximal intervals in $D_K(G)$, and remove from the list all the elements \emph{attributable} to this interval, namely the deck $D(I_i)$.

For the second part, we note that in $D_K(G)$ one can identify to which orbit each maximal interval belongs, and thus the maximal intervals in $D_K(G)$ attributable to some maximal interval $I$ of $G$ can all be taken from the same orbit. This orbit is necessarily the orbit in which $I$ lies.
\end{proof}

The above proof does not work when $G$ has only one non-singleton maximal proper interval, as we do not see a copy of the interval in $D_K(G)$. In order to recover the interval in this case, we need to know more about the structure of $K$.

\begin{lemma}\label{lem-single-interval-3-recon} Let $G$ be a decomposable graph having exactly one maximal non-singleton interval $I_{k^*}$ of size at least $3$. Then $I_{k^*}$ is reconstructible.\end{lemma}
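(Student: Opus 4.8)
The plan is to first pin down the size and the deck of $I_{k^*}$, and then to exhibit an actual copy of $I_{k^*}$ inside a single card, thereby avoiding any appeal to the (unproven) Reconstruction Conjecture for $I_{k^*}$ itself.

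First I would record what is already reconstructible. Writing $K=\skel{G}$, the hypothesis that $I_{k^*}$ is the only non-singleton maximal interval means $s(G)=|K|-1$, so $|I_{k^*}|=|G|-|K|+1$ is known, and by assumption it is at least $3$. As in Lemma~\ref{lem-multi-interval-recon} set $D_K(G)=\{G-v\in D(G):\skel{G-v}=K\}$. Deleting a singleton vertex $v=I_k$ yields $G-v=(K-k)[\dots]$, whose skeleton has fewer than $|K|$ vertices and so is never $K$; deleting $v\in I_{k^*}$ leaves $I_{k^*}-v$ (of size $\ge 2$) as the one non-singleton interval and keeps the skeleton $K$. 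Hence $D_K(G)=\{G-v:v\in I_{k^*}\}$, and reading the unique non-singleton maximal interval off each such card recovers precisely the deck $D(I_{k^*})$.

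The difficulty is that recovering $D(I_{k^*})$ is not enough on its own, since reconstructing $I_{k^*}$ from its own deck is exactly RC. I would therefore look for a card that contains a full, identifiable copy of $I_{k^*}$. The key observation is that $I_{k^*}$ remains an interval after deleting any vertex outside it, so for a singleton vertex $w=I_k$ (with $k\ne k^*$) the card $G-w=(K-k)[\dots]$ has $I_{k^*}$ as an interval. Moreover $I_{k^*}$ is a \emph{maximal} non-singleton interval of $G-w$ precisely when $k^*$ is a singleton in the modular decomposition of $K-k$ (equivalently, $\{k^*\}$ is a maximal interval of $K-k$); otherwise $k^*$ lies in a larger module and $I_{k^*}$ gets absorbed into a strictly bigger interval. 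In the favourable case $I_{k^*}$ appears in $G-w$ as a maximal non-singleton interval of size $|I_{k^*}|$, all other maximal non-singleton intervals of $G-w$ being inflations of proper modules of $K-k$ by singletons, and hence induced subgraphs of the known graph $K$ of size at most $|K|-2$. Thus when $|I_{k^*}|\ge |K|-1$ the copy of $I_{k^*}$ is the unique largest maximal non-singleton interval of $G-w$ and can be read off immediately; when $|I_{k^*}|\le |K|-2$ I would single it out among the candidate intervals of size $|I_{k^*}|$ using the deck $D(I_{k^*})$ recovered above, noting that every spurious candidate is a prescribed induced subgraph of $K$.

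The hard part is the purely structural claim that such a deletion always exists: for every indecomposable $K$ with $|K|\ge 4$ and every vertex $k^*$, there is some $k\ne k^*$ for which $\{k^*\}$ is a maximal interval of $K-k$. When $K$ is not critically indecomposable I expect this to follow from Theorem~\ref{thm-schmerl-trotter} and Corollary~\ref{cor-indecomposable-subgraph}: if some $K-k$ with $k\ne k^*$ is itself indecomposable, then every vertex, in particular $k^*$, is a singleton in its decomposition and there are no spurious intervals at all. The genuine obstacle is the critically indecomposable skeletons of Figure~\ref{fig-crit-indecs}, where no single vertex deletion preserves indecomposability; here I would argue directly from their explicit ``half-graph'' structure that, after deleting a suitable vertex, the only non-trivial module sits at one ``end'', while every other vertex --- which we may arrange to include $k^*$ --- is a singleton in the decomposition. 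Establishing this claim uniformly, together with the edge case in which $k^*$ is the only vertex whose deletion keeps $K$ indecomposable, is where the real work lies.
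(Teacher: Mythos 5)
Your opening reduction is sound and matches the paper's: $s(G)=|K|-1$ gives $|I_{k^*}|=|G|-s(G)$, and reading the unique non-singleton maximal interval off each card in $D_K(G)$ recovers $D(I_{k^*})$. Your insistence on then locating an explicit copy of $I_{k^*}$ in a single card is also the right instinct. But the proof has a genuine gap at exactly the point you flag as ``where the real work lies'': you need, for every indecomposable $K$ and every $k^*\in K$, some $k\neq k^*$ such that $\{k^*\}$ is a maximal proper interval of $K-k$, and this is never established. The two problematic situations are precisely the ones you defer: $K$ critically indecomposable, and $k^*$ being the \emph{unique} vertex whose deletion preserves indecomposability. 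In the latter case, for example, $K-k$ may decompose as $K_2[L,\bullet]$ or $\overline{K_2}[L,\bullet]$ with $k^*$ buried inside the large part $L$, so that $k^*$ lies in a maximal interval of size $|K|-1-1$ and your ``favourable case'' simply does not occur for that $k$; you give no argument that some other deletion rescues the situation. The paper closes this hole by a different route: Corollary~\ref{cor-indecomposable-subgraph} supplies an indecomposable $L\ni k^*$ with $|L|\geq|K|-2$, and the proof then analyses the card(s) obtained by deleting vertices outside $L$, extracting $I_{k^*}$ even when it appears wrapped inside a degenerate interval $K_2[I_{k^*},\bullet]$ or $\overline{K_2}[I_{k^*},\bullet]$. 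That extraction depends on first disposing of the case where $I_{k^*}$ is itself degenerate (disconnected, or with disconnected complement), since such graphs are already reconstructible from the deck $D(I_{k^*})$ you recovered; your proposal omits this reduction and would need it too.

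A second, smaller gap: when $|I_{k^*}|\leq|K|-2$ you propose to ``single out'' the true copy among same-size candidate intervals ``using the deck $D(I_{k^*})$''. Distinguishing two graphs of equal order by comparing decks is exactly what one cannot do without assuming RC; if a spurious candidate $K[M]$ happened to be deck-equivalent to $I_{k^*}$ you could not certify which interval to output, short of a further counting argument (subtracting the known multiset of inflated modules of $K-k$, which in turn requires identifying which vertex $k$ of $K$ was deleted to form the card). The paper avoids this issue entirely because in each of its cases the copy of $I_{k^*}$ is pinned down as the unique non-singleton interval of the relevant size or the unique one with non-degenerate skeleton.
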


\begin{proof}Note first that we know the size of $I_{k^*}$, since $|I_{k^*}|=|G|-s(G)$. Moreover, we can obtain the deck $D(I_{k^*})$ since it is precisely the list of maximal non-singleton intervals in the cards of $D_K(G)$. Thus, if $I_{k^*}$ is a reconstructible graph then we are done, and in particular we may now assume that $I_{k^*}$ is neither a disconnected graph nor the complement of a disconnected graph. Thus $I_{k^*}$ is either an indecomposable graph or a non-degenerate decomposable graph.

Suppose first that $|K|\geq 6$, so by Corollary~\ref{cor-indecomposable-subgraph} there exists a proper subgraph $L$ of $K$ such that $k^*\in L$, $|L|\geq |K|-2$ and $L$ is indecomposable: fix one such $L$. In the case that $|L|=|K|-1$, there exists some card in $D(G)$ whose skeleton is precisely $L$, and whose only non-trivial interval is therefore $I_{k^*}$. Thus, we will now assume that $|L|=|K|-2$. There are two cases:
\begin{enumerate}
\item[(a)] There exists a card $H$ in $D(G)$ that is an inflation of $L$. If $H$ has two maximal proper intervals, then one will be of size $2$, and the other is $I_{k^*}$ (which we can identify as it has at least $3$ vertices). If $H$ has only one maximal proper interval, then it is either $K_2[I_{k^*},\bullet]$ or $\overline{K_2}[I_{k^*},\bullet]$ (where $\bullet$ denotes the single vertex graph), and in either case $I_k^*$ can be recovered since it is non-degenerate.

\item[(b)] There exists a degenerate card in the deck of the form $K_2[L,\bullet]$ or $\overline{K_2}[L,\bullet]$. In particular, since $k^*\in L$ we know that this card is either $K_2[L[I_{k^*},\bullet,\ldots,\bullet],\bullet]$ or $\overline{K_2}[L[I_{k^*},\bullet,\ldots,\bullet],\bullet]$. Now we know that $|L|\geq 4$ and $L$ is indecomposable, so we can recover the graph $L[I_{k^*},\bullet,\ldots,\bullet]$ from this card, and then $I_{k^*}$ is the only maximal proper interval.
\end{enumerate}

This leaves the cases $|K|=4$ and $|K|=5$, which can be verified by directly considering inflations of the graphs in Figure~\ref{fig-small-indec}. When $|K|=4$, the three cards in $D(G)\setminus D_K(G)$ all have degenerate skeletons, while $I_{k^*}$ has a non-degenerate skeleton and so can be recovered by inspection. When $|K|=5$, in all cases except the rightmost graph of Figure~\ref{fig-small-indec} there must be a card whose skeleton is $K_4$ and whose only non-singleton maximal proper interval is $I_{k^*}$. For the rightmost graph, the decomposition of all cards is degenerate while the decomposition of $I_{k^*}$ is not.
\end{proof}

\begin{lemma}\label{lem-single-interval-2-recon}Let $G$ be a decomposable graph having exactly one maximal non-singleton interval $I_{k^*}$ of size $2$. Then $I_{k^*}$ is reconstructible.\end{lemma}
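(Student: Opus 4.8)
Since $|I_{k^*}|=2$, the graph $I_{k^*}$ is either $K_2$ or $\overline{K_2}$, so the whole task reduces to deciding a single bit: whether the two vertices $a,b$ comprising $I_{k^*}$ are adjacent. Write $K=\skel{G}$, which is known by Theorem~\ref{thm-skel-recon}; since every interval other than $I_{k^*}$ is a singleton we have $s(G)=|K|-1$ and hence $|G|=|K|+1$. Two basic observations guide the plan. First, deleting either vertex of $I_{k^*}$ collapses the interval to a singleton, so $G-a\cong G-b\cong K$, and these are the only cards whose skeleton is all of $K$ (any singleton deletion produces an inflation of some $K-k$, whose skeleton has fewer than $|K|$ vertices); note that this holds irrespective of the adjacency of $a$ and $b$, so these two cards carry no information about the bit. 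Second, since the number of edges is reconstructible and $e(G)=e(G-a)+\deg_G(a)=e(K)+\deg_G(a)$, the quantity $\deg_G(a)=\deg_K(k^*)+\epsilon$ is reconstructible, where $\epsilon\in\{0,1\}$ records whether $a\sim b$.

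The main idea is to find a card in which $I_{k^*}$ is displayed unambiguously and then simply read off the adjacency of its two vertices. Concretely, I would search the deck for a card $C$ whose skeleton is indecomposable of size exactly $|K|-1$. Any such $C$ must arise by deleting a singleton vertex sitting at some $k\in K$ with $k\neq k^*$ and $K-k$ indecomposable: deleting a singleton at $k$ yields an inflation of $K-k$, whose skeleton has size $|K|-1$ precisely when $K-k$ is itself indecomposable, in which case $\skel{C}=K-k$ and the unique non-singleton interval of $C$ is exactly $I_{k^*}$. Computing the modular decomposition of $C$ intrinsically therefore isolates the pair $\{a,b\}$, and whether they are joined by an edge reveals $\epsilon$. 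Crucially, every card of this form yields the \emph{same} (correct) answer, so no consistency check between candidate cards is required.

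It remains to guarantee that such a card exists, i.e.\ that some $k\neq k^*$ has $K-k$ indecomposable. This is where Schmerl--Trotter and its refinements enter: applying Corollary~\ref{cor-indecomposable-subgraph} to $K$ at the vertex $k^*$ yields an indecomposable induced subgraph $X\ni k^*$ with $|X|\in\{|K|-1,|K|-2\}$ whenever $|K|\geq 6$. When $|X|=|K|-1$ we have $X=K-k$ for some $k\neq k^*$, and the desired card is found. The difficulty is concentrated in the complementary situation, where \emph{every} single-vertex deletion of $K$ is decomposable; by Theorem~\ref{thm-schmerl-trotter} this forces $K$ to be critically indecomposable, one of the graphs of Figure~\ref{fig-crit-indecs} (up to complementation), together with the small exceptional sizes $|K|\in\{4,5\}$.

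The critically indecomposable case is the main obstacle, and I would treat it directly using the explicit structure of these graphs. A useful feature is that, up to complementation, such a $K$ has its vertices partitioned by degree into orbits of its reversal automorphism realising \emph{distinct} degree values, so the reconstructible quantity $\deg_K(k^*)+\epsilon$ already pins down $\epsilon$ at the two extreme readings (a maximum-degree reading forces $\epsilon=1$ and a minimum-degree reading forces $\epsilon=0$, since $K$ has no vertex of larger, resp.\ smaller, degree). For the intermediate degrees, where $\deg_K(k^*)+\epsilon$ does not by itself separate an $\epsilon=0$ position of degree $d$ from an $\epsilon=1$ position of degree $d-1$, I would instead analyse the singleton-deletion cards one by one: each has skeleton $\skel{K-k}$, whose decomposition is completely determined by the half-graph structure, and locating $\{a,b\}$ within that decomposition recovers the bit. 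Carrying out this interior analysis, together with the finite verifications for $|K|\in\{4,5\}$ by inspection of Figure~\ref{fig-small-indec}, is the technically delicate part of the argument.
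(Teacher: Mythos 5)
Your main line of attack coincides with the paper's: find a card whose skeleton is an indecomposable $K-k$ with $k\neq k^*$ and read off $I_{k^*}$ as its unique non-singleton maximal interval. The genuine gap is in your analysis of when no such card exists. You assert that the complementary situation is the one ``where every single-vertex deletion of $K$ is decomposable,'' and hence that $K$ must be critically indecomposable. But the correct negation of ``some $k\neq k^*$ has $K-k$ indecomposable'' is only that every $K-k$ with $k\neq k^*$ is decomposable; it remains possible that $K-k^*$ itself is indecomposable, in which case $K$ is \emph{not} critically indecomposable and your half-graph analysis says nothing. This is not vacuous: the bull graph (rightmost in Figure~\ref{fig-small-indec}) is an indecomposable graph with exactly one vertex whose deletion preserves indecomposability, so the implication you rely on is false in general, and nothing in Theorem~\ref{thm-schmerl-trotter} or Corollary~\ref{cor-indecomposable-subgraph} rules out analogous examples with $|K|\geq 6$. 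The paper handles this case head-on: from the deck one recognises that $k^*$ is the unique vertex whose deletion leaves $K$ indecomposable (because no card outside $D_K(G)$ has a skeleton on $|K|-1$ vertices), which identifies $k^*$ inside $K$ and hence $\deg_K(k^*)$; the reconstructibility of $|E(G)|$ then determines whether $I_{k^*}=K_2$ or $\overline{K_2}$. You set up exactly this edge-count observation in your opening paragraph but never deploy it, so as written this case falls through.

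Secondarily, your treatment of the genuinely critically indecomposable case is incomplete by your own admission: the quantity $\deg_K(k^*)+\epsilon$ resolves only the two extreme degree classes (and in the half-graph each degree is shared by an orbit of size two, so intermediate readings are ambiguous between an $\epsilon=0$ vertex of degree $d$ and an $\epsilon=1$ vertex of degree $d-1$), and the promised ``interior analysis'' is not carried out. The paper's route here is simpler and complete: each critically indecomposable graph (up to complementation) has exactly two vertices whose removal isolates a vertex and leaves a smaller member of the same family; at least one of these is not $k^*$, so the deck contains a disconnected card in which $I_{k^*}$ appears as the unique non-singleton maximal interval. You would do better to adopt that argument than to pursue the degree bookkeeping.
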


\begin{proof}
First suppose that the skeleton $K$ (which we can reconstruct by Theorem~\ref{thm-skel-recon}) is not critically indecomposable, so there exists $k'\in K$ such that $K-k'$ is indecomposable. If there is a unique vertex $k'$ with this property, then we can recognise the case where $k'=k^*$: every card in $D(G)\setminus D_K(G)$ is an inflation of some indecomposable with at most $|K|-2$ vertices. In this case, since $k'=k^*$ is unique we can identify which point of $K$ needs to be inflated, and using $|E(G)|$ (which is reconstructible) we can determine whether $I_{k^*}=K_2$ or $\overline{K_2}$.\footnote{Note that we have in fact reconstructed the graph in the case where $k'=k^*$ is unique.} Thus we can assume that there is some $k'\neq k^*$ with $K-k'$ indecomposable. In $D(G)$, there is a graph whose skeleton is exactly $K-k'$, and the only non-singleton maximal proper interval of any such card is $I_{k^*}$.

The case where $K$ is critically indecomposable follows by direct case analysis of inflations of the family of graphs in Figure~\ref{fig-crit-indecs}: note that each graph in this family has exactly two points whose removal leaves a graph which is the disjoint union of an isolated vertex and a smaller graph from the same family, and an analogous argument applies to the complement of this family. As at least one these points cannot be $k^*$, there is at least one card (and at most two) in $D(G)$ with an isolated vertex, and the other component of the graph has exactly one non-singleton maximal proper interval, namely $I_{k^*}$.
\end{proof}

\section{Reconstructing decomposable graphs}\label{sec-graph-recon}

Building on the results of the previous section, we can now state and prove the primary observation of this section. We use $\mathrm{Orb}_K(k^*)$ to denote the orbit of a vertex $k^*\in K$ under the action of the automorphism group of $K$.

\begin{theorem}\label{thm-non-hereditary-recon}Let $G=K[I_k:k\in K]$ be a decomposable graph with $|K|\leq |G|-2$. If there exists a non-singleton interval $I_{k^*}$ and $u\in I_{k^*}$ for which $I_{k^*}-u\not\in\{I_j : j \in \mathrm{Orb}_K(k^*)\}$, then $G$ is reconstructible.\end{theorem}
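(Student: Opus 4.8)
The plan is to reduce to a comparison between $G$ and an arbitrary graph $H$ with $D(H)=D(G)$, and then to exhibit a single card that pins down the placement of the intervals on $\skel{G}$. By the recognition theorem together with Theorem~\ref{thm-skel-recon} we have $\skel{H}=\skel{G}=:K$ and $s(H)=s(G)$, and by Lemmas~\ref{lem-multi-interval-recon}, \ref{lem-single-interval-3-recon} and~\ref{lem-single-interval-2-recon} the two graphs share the same multiset $\mathcal M$ of maximal proper intervals (and, when $|K|-s(G)\geq 2$, the same assignment of intervals to orbits of $\mathrm{Aut}(K)$). Writing $G=K[I_k:k\in K]$ and $H=K[I'_m:m\in K]$, it remains only to show that the arrangements $k\mapsto I_k$ and $m\mapsto I'_m$ agree up to an automorphism of $K$; this is the one piece of information not already supplied by Section~\ref{sec-skel-recon}.

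To obtain it, I would single out the card $C=G-u$ for the vertex $u\in I_{k^*}$ given in the hypothesis. First I would check that $C$ has skeleton $K$: this holds because $G-u$ still contains a non-singleton interval, namely $I_{k^*}-u$ when $|I_{k^*}|\geq 3$, or some other non-singleton interval otherwise. The one situation in which $C$ could collapse to $K$ --- when $I_{k^*}$ is the unique non-singleton interval and $|I_{k^*}|=2$ --- is exactly the case $|G|=|K|+1$, which is excluded by the hypothesis $|K|\leq|G|-2$. Thus $C=K[J_k:k\in K]$ with $J_k=I_k$ for $k\neq k^*$ and $J_{k^*}=I_{k^*}-u$, and the (canonical) modular decomposition of $C$ lays out all of these intervals on the vertices of $K$.

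The crucial point is that the hypothesis $I_{k^*}-u\notin\{I_j:j\in\mathrm{Orb}_K(k^*)\}$ makes the position $k^*$ detectable. Call a position of a card with skeleton $K$ \emph{anomalous} if the interval sitting there is not isomorphic to any interval of $G$ on the same orbit; when $|K|-s(G)\geq 2$ this notion is available because the orbit-labelled multiset is reconstructible, and when there is a single non-singleton interval one may instead use that $I_{k^*}-u$ is the only non-singleton interval type absent from $\mathcal M$, since then $|I_{k^*}|\geq 3$. In either reading, $k^*$ is the unique anomalous position of $C$. Now, since $C\in D(G)=D(H)$, we have $C\cong H-w$ for some $w$, and an isomorphism induces an automorphism $\phi\in\mathrm{Aut}(K)$ under which the interval of $C$ at $k$ and that of $H-w$ at $\phi(k)$ are isomorphic. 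Anomaly is preserved by this isomorphism, so the unique anomalous position of $H-w$ is $\phi(k^*)=:m_0$, the vertex whose interval was shortened; this gives $I'_{m_0}-w\cong I_{k^*}-u$ and $I_k\cong I'_{\phi(k)}$ for every $k\neq k^*$.

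Finally I would upgrade this agreement off $k^*$ to agreement everywhere. Since $\{I_k:k\in K\}=\mathcal M=\{I'_m:m\in K\}$ as multisets and $\{I_k:k\neq k^*\}=\{I'_m:m\neq m_0\}$, cancelling the common submultiset forces $I_{k^*}\cong I'_{m_0}$. Hence $I_k\cong I'_{\phi(k)}$ for all $k\in K$, so $\phi$ witnesses $G\cong H$. The main obstacle, and the reason the hypothesis is phrased as it is, lies in guaranteeing that $C$ exhibits exactly one anomalous position: both the requirement $I_{k^*}-u\notin\{I_j:j\in\mathrm{Orb}_K(k^*)\}$ (so that $k^*$ is anomalous) and the bound $|K|\leq|G|-2$ (so that $C$ does not degenerate to $K$, and so that $I_{k^*}-u$ remains non-singleton in the single-interval case) are exactly what is needed to make this local signature both present and unambiguous.
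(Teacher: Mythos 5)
Your proof is correct and follows essentially the same route as the paper's: both rest on the Section~\ref{sec-skel-recon} lemmas to recover the skeleton and the (orbit-labelled) multiset of maximal intervals, and both exploit the card $G-u$, in which the hypothesis makes $I_{k^*}-u$ an unmistakable signature at $\mathrm{Orb}_K(k^*)$ that can be swapped back for $I_{k^*}$. Your deck-comparison framing and the final multiset-cancellation step merely make explicit the uniqueness argument that the paper's procedural description leaves implicit.
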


\begin{proof}
First, note that the condition $|K|\leq |G|-2$ ensures that $G$ either has at least two non-singleton maximal proper intervals, or a single maximal proper interval with at least three vertices.

Suppose first that $G$ has at least two non-singleton maximal proper intervals. By Lemma~\ref{lem-multi-interval-recon} we can recover all of $\{I_k:k\in K\}$ and identify which intervals belong to each orbit of $K$. We now reconstruct $G$ by taking any card of $D_K(G)$ for which there is a point of $\mathrm{Orb}_K(k^*)$ which has been inflated by $I_{k^*}-u$
(note that there may be more than one in the case where $I_{k^*}-u\cong I_{k^*}-v$ for vertices $u\neq v$), and replacing the maximal proper interval $I_{k^*}-u$ of the card with $I_{k^*}$.

In the case where there is exactly one non-singleton interval with at least three vertices, we use Lemma~\ref{lem-single-interval-3-recon} to recover $I_{k^*}$. It is now trivial to reconstruct $G$: we find any graph in $D_K(G)$ and replace the only non-singleton maximal proper interval with $I_{k^*}$.
\end{proof}

As an example of its use, we identify two corollaries in contrasting settings:

\begin{corollary}\label{cor-trivial-aut-recon}Let $G$ be a decomposable graph whose skeleton $K$ has trivial automorphism group and satisfies $|K|\leq |G|-2$. Then $G$ is reconstructible.\end{corollary}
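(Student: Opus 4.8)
The plan is to derive this as an immediate consequence of Theorem~\ref{thm-non-hereditary-recon}, checking that its hypothesis is forced upon us the moment $\mathrm{Aut}(K)$ is trivial. Write $G=K[I_k:k\in K]$ for the modular decomposition with $K=\skel{G}$, which is unique since we are assuming $|K|\geq 4$. Because $G$ is decomposable, at least one of the parts is non-singleton; fix such an index and call it $k^*$, so that $|I_{k^*}|\geq 2$.

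Next I would exploit the triviality of the automorphism group. Since the only automorphism of $K$ is the identity, every vertex orbit is a singleton, and in particular $\mathrm{Orb}_K(k^*)=\{k^*\}$. Consequently the set appearing in Theorem~\ref{thm-non-hereditary-recon} collapses to a single graph, namely $\{I_j:j\in\mathrm{Orb}_K(k^*)\}=\{I_{k^*}\}$.

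The final observation is a trivial counting one: for any $u\in I_{k^*}$ the graph $I_{k^*}-u$ has $|I_{k^*}|-1$ vertices, which is strictly fewer than $|I_{k^*}|$, so $I_{k^*}-u$ cannot be isomorphic to $I_{k^*}$. Hence $I_{k^*}-u\notin\{I_{k^*}\}=\{I_j:j\in\mathrm{Orb}_K(k^*)\}$, which is precisely the condition required by Theorem~\ref{thm-non-hereditary-recon}. Combining this with the standing hypothesis $|K|\leq|G|-2$, the theorem applies verbatim and yields that $G$ is reconstructible.

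There is no genuine obstacle here: the content of the corollary lies entirely in recognising that a trivial automorphism group forces each ``orbit'' of intervals to consist of a single interval, so that the potentially restrictive non-membership hypothesis of the theorem degenerates into the automatically true statement that deleting a vertex from an interval changes its size. The only point worth stating carefully is that decomposability guarantees the existence of a non-singleton part $I_{k^*}$ to which the argument can be applied.
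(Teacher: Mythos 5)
Your argument is correct and is exactly the (unwritten) argument the paper intends: the corollary is stated as an immediate consequence of Theorem~\ref{thm-non-hereditary-recon}, with trivial $\mathrm{Aut}(K)$ collapsing $\{I_j : j\in\mathrm{Orb}_K(k^*)\}$ to $\{I_{k^*}\}$ and the cardinality count ruling out $I_{k^*}-u\cong I_{k^*}$. You also correctly supply the one detail worth making explicit, namely that decomposability of $G$ guarantees a non-singleton part $I_{k^*}$ to which the theorem's hypothesis can be applied.
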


\begin{corollary}\label{cor-vertex-tran-recon}Any graph whose skeleton is vertex-transitive is reconstructible.\end{corollary}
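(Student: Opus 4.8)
The plan is to reconstruct $K=\skel{G}$ first and then recover the intervals together with their arrangement on $K$. Since we assume throughout that $G$ is decomposable, Theorem~\ref{thm-skel-recon} reconstructs $K$, and vertex-transitivity means $K$ has a single orbit, so $\mathrm{Orb}_K(k^*)=V(K)$ for every $k^*$. I would first clear away the smallest case: if $|G|=|K|+1$ then $G$ has exactly one non-singleton interval, of size $2$, which Lemma~\ref{lem-single-interval-2-recon} reconstructs (it is $K_2$ or $\overline{K_2}$, the choice being fixed by the reconstructible edge count). Because $K$ is vertex-transitive, every placement of this interval yields the same graph up to isomorphism, so $G$ is reconstructed. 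In every remaining (decomposable) case $|K|\le|G|-2$, so Theorem~\ref{thm-non-hereditary-recon} is available provided its hypothesis can be met.

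The main engine is the following choice of interval for Theorem~\ref{thm-non-hereditary-recon}. Let $I_{k^*}$ be a non-singleton interval of least order $m\ge 2$. If $m\ge 3$, then for any $u\in I_{k^*}$ the graph $I_{k^*}-u$ has order $m-1\ge 2$, strictly smaller than that of every non-singleton interval and larger than a singleton; hence $I_{k^*}-u$ is isomorphic to no $I_j$, that is, $I_{k^*}-u\notin\{I_j:j\in\mathrm{Orb}_K(k^*)\}$, and Theorem~\ref{thm-non-hereditary-recon} reconstructs $G$. The same conclusion holds when $m=2$ provided $G$ has no singleton interval, since then $I_{k^*}-u$ is a single vertex and $\{I_j:j\in V(K)\}$ contains no singleton. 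In particular this disposes of every $G$ whose intervals are all isomorphic (a blow-up $K[I,\dots,I]$), and more generally of everything except one stubborn family.

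That family---the main obstacle---is when the least non-singleton order is $m=2$ \emph{and} $G$ has a singleton interval. Here the set of interval types is closed under vertex deletion (for instance $\{\bullet,K_2\}$, or $\{\bullet,K_2,\overline{K_2}\}$, or $\{\bullet,K_2,K_3\}$), so for every non-singleton $I_{k^*}$ and every $u$ the graph $I_{k^*}-u$ is again an interval type, and Theorem~\ref{thm-non-hereditary-recon} genuinely cannot be applied. Moreover there must be at least two non-singleton intervals (a single one of size $2$ would force $|G|=|K|+1$, already handled), so Lemma~\ref{lem-multi-interval-recon} recovers the multiset $\{I_k:k\in K\}$---but its orbit refinement is vacuous here, since $K$ has only one orbit. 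Thus the deck still determines which intervals occur and with what multiplicity, and the only missing datum is the \emph{arrangement}: the assignment $V(K)\to\{\text{interval types}\}$, which must be pinned down up to $\mathrm{Aut}(K)$.

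I expect recovering this arrangement to be the crux. The cards in $D(G)\setminus D_K(G)$ are exactly those obtained by deleting a vertex that forms a singleton interval at some $k_0\in K$; such a card is an inflation of $K-k_0$ carrying the induced arrangement, and because $K-k_0$ is far less symmetric than $K$ it exposes the positions of the enlarged intervals relative to $k_0$ (via Lemma~\ref{lem-multi-interval-recon} applied inside the card whenever $K-k_0$ is indecomposable). Aggregating these local arrangements over all admissible $k_0$, and using Kelly's Lemma to supply the reconstructible counts of the relevant ``partially inflated'' induced subgraphs in order to resolve residual ambiguities, should determine the arrangement on $V(K)$ up to $\mathrm{Aut}(K)$ and hence reconstruct $G$. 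The delicate points are that $K-k_0$ may itself be decomposable (so that reading off positions requires a secondary decomposition argument), and that one must verify the assembled local data are consistent with a \emph{unique} global arrangement; establishing this last uniqueness is where I anticipate the real work.
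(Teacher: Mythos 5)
Your reduction is sound up to the final case, and your use of a \emph{minimal} non-singleton interval to trigger Theorem~\ref{thm-non-hereditary-recon} is a clean way to see that the only obstruction occurs when the smallest non-singleton interval has size $2$ and a singleton interval is present. (Two small inaccuracies on the way: the statement covers indecomposable $G$ with $G=\skel{G}$ vertex-transitive, i.e.\ regular graphs, which the paper dispatches via Kelly's theorem rather than excluding by assumption; and your claim that in the stubborn family the set of interval types is automatically closed under vertex deletion is false, e.g.\ $\{\bullet,K_2,C_5\}$ --- though this costs nothing, since any failure of closure just reopens Theorem~\ref{thm-non-hereditary-recon}.) The genuine gap is the ``arrangement'' step, which you explicitly leave as a hope: you propose aggregating local data over all singleton deletions and invoking Kelly's Lemma, and you flag that you cannot verify the resulting global arrangement is unique. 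That uniqueness is precisely the content of the case, so the proof is not complete.

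The paper closes this case with two observations you missed, both consequences of vertex-transitivity. First, a vertex-transitive graph is regular, hence cannot be critically indecomposable (the graphs of Figure~\ref{fig-crit-indecs} are not regular), so by Theorem~\ref{thm-schmerl-trotter} and transitivity $K-k$ is indecomposable for \emph{every} $k\in K$. This kills your worry that $K-k_0$ might be decomposable: a single card $H\in D(G)\setminus D_K(G)$, obtained by deleting a singleton-interval vertex, has skeleton exactly $K-k$ and decomposes uniquely as an inflation of $K-k$ by all the intervals of $G$ save one singleton --- so the entire arrangement on $K-k$ is read off from one card, with no aggregation or counting needed. Second, regularity of $K$ makes the re-attachment of the deleted vertex unique: the new vertex must be joined to precisely the degree-deficient vertices of $K-k$, so there is exactly one way to extend $H$'s arrangement on $K-k$ to an arrangement on $K$. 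This is the uniqueness you were missing, and it is supplied by the structure of $K$ rather than by comparing cards.
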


\begin{proof}[Proof of Corollary~\ref{cor-vertex-tran-recon}]
Consider a graph $G$ whose skeleton $K$ is vertex-transitive. If $|K|=|G|$ then $G$ is regular, and hence can be reconstructed (see~\cite{kelly:a-congruence-th:}). If $|K|=|G|-1$, then $G$ is an inflation of $K$ by a single interval of size two. Having recovered the interval using Lemma~\ref{lem-single-interval-2-recon}, we inflate any vertex of $K$ by this interval to recover $G$.

Thus we can assume $|K|\leq |G|-2$. The only case that is not covered by Theorem~\ref{thm-non-hereditary-recon} is where the set $\I = \{I_k:k\in K\}$ is hereditary.\footnote{We say that $\I$ is \emph{hereditary} if for any $I\in \I$, whenever $J$ is an induced subgraph of $I$ then $J\in \I$.} In particular, this means that there is at least one singleton interval in $\I$, so there is some card $H\in D(G)\setminus D_K(G)$ arising from deleting one of these singletons. However, since $K$ is vertex transitive it cannot be critically indecomposable, and it then follows by transitivity that $K-k$ is indecomposable for every $k\in K$. Hence the skeleton of $H$ is $K-k$ (for any $k$), and
the maximal proper intervals of $H$ consist of all the maximal proper intervals of $G$, except that there is one singleton missing.  We can now reconstruct $G$ from $H$ since we can reconstruct the regular graph $K$ from $K-k$.
\end{proof}

\paragraph{Single proper interval of size two.} Suppose $G$ is a decomposable graph with skeleton $K$ satisfying $|K|=|G|-1$. Lemma~\ref{lem-single-interval-2-recon} does not in general identify which orbit of $K$ the single inflated vertex arises from. However, by imposing some further conditions on $K$ we can complete this step in the reconstruction.

Bollob\'as~\cite{bollobas:almost-every-graph:} defines a family of graphs $\mathcal{F}$ with the following property: $G \in \mathcal{F}$ if $G$ has trivial automorphism group, and all subgraphs of size $|G|-1$ and $|G|-2$ embed uniquely in~$G$.  Bollob\'as shows that this family contains almost all graphs, and in fact only three cards are needed to reconstruct any graph from the family.

Requiring that $K$ lies in $\mathcal{F}$ is a sufficient condition to identify which vertex of $K$ to inflate, but we can take a slightly more general class of graphs, which explicitly highlights the properties that we require. Define the family $\mathcal{G}$ to consist of all graphs $G$ satisfying the following two conditions:
\begin{enumerate}
\item[(1)] No \emph{pseudo-similar} vertices: For $u,v\in G$, if $G-u\cong G-v$, then $u\in\mathrm{Orb}_G(v)$
\item[(2)] For $u,v,w\in G$, if $u\in\mathrm{Orb}_{G-w}(v)$, then $u\in\mathrm{Orb}_G(v)$.
\end{enumerate}
Note that $\mathcal{F}\subset\mathcal{G}$: If $G\in\mathcal{F}$, then $G$ and all cards in $D(G)$ must have trivial automorphism groups, from which conditions (1) and (2) follow. On the other hand, $\mathcal{G}$ contains graphs which are not in $\mathcal{F}$: for example, $\mathcal{G}$ contains all vertex transitive graphs.

\begin{theorem}\label{thm-size-2-recon} Let $G$ be a decomposable graph whose skeleton $K$ satisfies $|K|=|G|-1$, and $K\in\mathcal{G}$. Then $G$ is reconstructible.
\end{theorem}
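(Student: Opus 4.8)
The plan is to reconstruct $G$ by first recovering $K=\skel{G}$ and the interval $I_{k^*}$, and then pinning down the single orbit of $K$ in which the inflated vertex $k^*$ lies; once that orbit is known, inflating any representative by $I_{k^*}$ recovers $G$. By Theorem~\ref{thm-skel-recon} we reconstruct $K$, and by Lemma~\ref{lem-single-interval-2-recon} we reconstruct $I_{k^*}$, so we know whether $I_{k^*}=K_2$ or $\overline{K_2}$. Since $|K|=|G|-1$, the deck splits neatly: exactly two cards (obtained by deleting one of the two vertices of $I_{k^*}$) are isomorphic to $K$, while the remaining $|K|-1$ cards arise from deleting the singleton interval at some vertex $k\neq k^*$ and are therefore inflations of $K-k$ in which the unique non-singleton interval sits at (the copy of) $k^*$. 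The whole problem is thus to read off $\mathrm{Orb}_K(k^*)$ from these latter cards.

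The core argument uses a card $H\in D(G)\setminus D_K(G)$ for which $K-k$ is \emph{indecomposable} (so that $\skel{H}=K-k$ has size $|K|-1$ and its unique non-singleton interval explicitly marks $k^*$). From such an $H$ I would read the isomorphism type of $\skel{H}\cong K-k$ together with the location of $k^*$ inside it, i.e.\ the set $\mathrm{Orb}_{K-k}(k^*)$. To transfer this information into $K$ I would fix an embedding of $\skel{H}$ as $K-k_0$ for a representative $k_0$; here condition~(1) (no pseudo-similar vertices) is exactly what guarantees this embedding is unambiguous, since it forces $K-k_0\cong K-k$ to imply $k_0\in\mathrm{Orb}_K(k)$, so the deleted vertex is determined up to $\mathrm{Aut}(K)$. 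The image $p$ of $k^*$ under this embedding is then well defined only up to $\mathrm{Aut}(K-k_0)$, and condition~(2) upgrades this card-orbit to a genuine orbit of $K$: it yields $p\in\mathrm{Orb}_K(k^*)$, whence $\mathrm{Orb}_K(k^*)=\mathrm{Orb}_K(p)$ can be computed inside the known graph $K$. Inflating a vertex of this orbit by $I_{k^*}$ reconstructs $G$.

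It remains to guarantee a usable card $H$, equivalently that some $K-k$ with $k\neq k^*$ is indecomposable. If no card in $D(G)\setminus D_K(G)$ has skeleton of size $|K|-1$, then $K-k$ is decomposable for every $k\neq k^*$; I would argue that $K\in\mathcal{G}$ cannot be critically indecomposable (each graph in the family of Figure~\ref{fig-crit-indecs} violates condition~(2): deleting a suitable vertex turns two vertices lying in distinct orbits of $K$ into twins, and the same holds for their complements, which also lie outside $\mathcal{G}$), so by Theorem~\ref{thm-schmerl-trotter} some $K-k'$ is indecomposable. Hence the set $S=\{k:K-k\text{ is indecomposable}\}$ equals $\{k^*\}$; as $S$ is a union of $\mathrm{Aut}(K)$-orbits this forces $\mathrm{Orb}_K(k^*)=\{k^*\}$, so $k^*$ is the unique vertex of $K$ whose deletion is indecomposable and is therefore recognisable from $K$ alone, and we inflate it directly.

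The main obstacle is the orbit-lifting in the second step: a single card only reveals where $k^*$ lies \emph{up to automorphisms of that card}, and naively this is weaker than knowing $\mathrm{Orb}_K(k^*)$. Conditions~(1) and~(2) of the class $\mathcal{G}$ are tailored precisely to remove the two sources of ambiguity --- which vertex was deleted, and which vertex inside the card is inflated --- and verifying that they do so (and that they force $K$ away from the critically indecomposable family) is where the real work lies.
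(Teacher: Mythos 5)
Your proposal is correct and follows essentially the same route as the paper: reduce via Lemma~\ref{lem-single-interval-2-recon} to locating $\mathrm{Orb}_K(k^*)$, rule out critically indecomposable skeletons, dispose of the degenerate case where $k^*$ is the only vertex whose deletion leaves an indecomposable graph, and otherwise use a card that is an inflation of an indecomposable $K-k'$, with condition~(1) fixing the deleted vertex up to $\mathrm{Aut}(K)$ and condition~(2) lifting the card-orbit of $k^*$ to an orbit of $K$. The only differences are presentational (e.g.\ your explicit observation that $\{k: K-k \text{ indecomposable}\}$ is a union of orbits), not substantive.
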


\begin{proof}
By Lemma~\ref{lem-single-interval-2-recon}, we only need to identify the vertex $k^*$ of $K$ which needs to be inflated by the interval of size two, and in fact it suffices to identify any vertex in $\mathrm{Orb}_K(k^*)$. Observe that $\mathcal{G}$ does not contain the critically indecomposable graphs (although it can be shown by brute force that inflations of these can be reconstructed), and therefore $K$ is not critically indecomposable. Hence, there exists $k'\in K$ such that $K-k'$ is indecomposable.

As in the proof of Lemma~\ref{lem-single-interval-2-recon}, if $k'$ is unique with this property and $k'=k^*$, then we can uniquely identify $k^*$ in $K$. Thus we may assume that we have $k'\neq k^*$ with $K-k'$ indecomposable. In the deck of $G$, this means that there is a card that is an inflation of $K-k'$ by an interval of size two: the vertex of $K-k'$ that is inflated to form this card is $k^*$.

Fix any embedding $\phi:K-k'\hookrightarrow K$. We will be done if we can show that $\phi(k^*)\in \mathrm{Orb}_K(k^*)$, as inflating $\phi(k^*)$ is then isomorphic to inflating the vertex $k^*$. Let $k''\in K$ represent the unique vertex of $K$ that is not in the image of $\phi$, so that $\phi$ is an isomorphism from $K-k'$ to $K-k''$. By condition~(1), we have $k''\in\mathrm{Orb}_K(k')$, so there exists an automorphism $\psi$ of $K$ for which $\psi(k'')=k'$.

Now, $\psi$ is an isomorphism from $K-k''$ to $K-k'$, so $\psi\circ\phi$ is an automorphism of $K-k'$. Thus $\psi\circ\phi(k^*)\in\mathrm{Orb}_{K-k'}(k^*)$, and so by condition (2) $\psi\circ\phi(k^*)\in\mathrm{Orb}_K(k^*)$. However, $\psi$ was an automorphism of $K$, and so we conclude that $\phi(k^*)\in\mathrm{Orb}_K(k^*)$ as required.
\end{proof}

Combining this theorem with Corollary~\ref{cor-trivial-aut-recon} and the results of Bollob\'as~\cite{bollobas:almost-every-graph:}, we have that \emph{any} graph $G$ whose skeleton $K$ lies in the family of graphs $\mathcal{F}$ is reconstructible. Note that this includes graphs which are not themselves members of $\mathcal{F}$, for example by inflating a single vertex of $K\in\mathcal{F}$ by any graph which does not lie in $\mathcal{F}$.

\section{Concluding remarks}

\paragraph{A possible route to proving RC?} As exemplified by results such as those given in Section~\ref{sec-definitions}, indecomposable graphs have an emerging structure theory that very naturally fits with graph reconstruction. We offer, therefore, some faint optimism that this theory could be developed and exploited to reconstruct indecomposable graphs. For decomposable graphs, we have reduced the problem of graph reconstruction to considering two specific families. In both cases, the properties exploited to prove reconstruction in indecomposable graphs would invariably help.

\paragraph{Single interval of size two.} In the case $|K|=|G|-1$, Theorem~\ref{thm-size-2-recon} applies to graphs whose skeletons come from the family $\mathcal{G}$. If we concentrate on properties of indecomposable graphs, we can relax the conditions on the skeleton $K$ to the following: There exists a vertex $k'\in K$, such that $K-k'$ is indecomposable, $K-k \cong K-k'$ implies $k \in \mathrm{Orb}_K(k')$, and $u \in \mathrm{Orb}_{K-k'}(v)$ implies $u \in \mathrm{Orb}_K(v)$ for all $u,v \in K-k'$. What can be said about the structure of indecomposable graphs that do not have this property?

\paragraph{Hereditary orbits.} In the case where $|K|\leq |G|-2$, Theorem~\ref{thm-non-hereditary-recon} cannot be applied precisely when the set of maximal intervals belonging to each orbit is hereditary. A useful observation, which enabled us to deduce Corollary~\ref{cor-vertex-tran-recon}, is that every one of these sets necessarily contains a singleton interval, and so among the graphs in $D(G)$ we can find one which is an inflation of $K-k$ for every $k\in K$. However, we cannot guarantee that each card can be uniquely expressed as an inflation in this way, since not every $K-k$ will be indecomposable.

%
%
%
%
%
%

\bibliographystyle{acm}
\bibliography{../refs}

\end{document}